\font\tencmmib=cmmib10 \skewchar\tencmmib '60
\def\lessim{\ \lower4pt\hbox{$
\buildrel{\displaystyle <}\over\sim$}\ }
\def\gessim{\ \lower4pt\hbox{$\buildrel{\displaystyle >}
\over\sim$}\ }
\def\si{{\sigma}}
\newcommand{\la}{\langle}
\newcommand{\ra}{\rangle}
\newcommand{\e}{\mathbb{E}}
\newtheorem{lemma}{\bf Lemma}
\newtheorem{theorem}{\bf Theorem}
\newtheorem{remark}{\bf Remark}
\newtheorem{example}{\bf Example}
\newmdtheoremenv{theo}{Theorem}
\newenvironment{Proof of lemma}{\noindent{\bf Proof of Lemma}}{\hfill$\Box$\newline}
\newenvironment{Proof of theorem}{\noindent{\it Proof of Theorem}}{\hfill\scriptsize{$\Box$}\newline}
\newenvironment{Proof of theorems}{\noindent{\bf Proof of Theorems}}{\hfill$\Box$\newline}
\newenvironment{Proof of proposition}{\noindent{\bf Proof of Proposition}}{\hfill$\Box$\newline}
\newenvironment{Proof of propositions}{\noindent{\bf Proof of Propositions}}{\hfill$\Box$\newline}
\newenvironment{Proof of exercise}{\noindent{\it Proof of Exercise:}}{\hfill$\Box$}
\begin{document}

\title{Universality of chaos and ultrametricity in mixed $p$-spin models}

\author{
Antonio Auffinger\thanks{Department of Mathematics, Northwestern University. Email: auffing@math.northwestern.edu}  \\ \small{Northwestern University} \and
Wei-Kuo Chen\thanks{Department of Mathematics, University of Chicago. Email: wkchen@math.uchicago.edu}  \\ \small{University of Chicago}
}
\maketitle

\begin{abstract}
We prove disorder universality of chaos phenomena and ultrametricity in the mixed $p$-spin model under mild  moment assumptions on the environment. This establishes the long-standing belief among physicists that the Parisi solution in mean-field models is universal. Our results extend to universal properties of other physical observables in the mixed $p$-spin model as well as in different spin glass models. These include universality of quenched disorder chaos in the Edwards-Anderson (EA) model and quenched concentration for the magnetization in both EA and mixed $p$-spin models under non-Gaussian environments. In addition, we show quenched self-averaging for the overlap in the random field Ising model under small perturbation of the external field.
\end{abstract}

\section{Introduction}

It is widely expected that many statistical quantities and properties in disordered systems should not depend on the particular distribution of the environment. This phenomenom, described as universality, is a major topic of research within the probability and mathematical physics communities. The simplest example is the central limit theorem, where the limiting distribution of sum of independent and identically distributed random variables is Gaussian provided that these variables have  just a finite second moment. 

In spin glasses, universality is broadly accepted. Most of the fascinating predictions made by physicists should hold for any source of the environment under some moment conditions.
Among these predictions, the Parisi solution for mean field spin glass models stands as one of the most ingenious and important ideas of the past decades. Two major steps in the Parisi solution are known rigorously. First, the Parisi formula, which describes the limiting free energy \cite{Guerra03, P12, Tal06}, and, more recently, the proof of the ultrametricity conjecture by Panchenko \cite{Panchenko}.  In his celebrated work, Panchenko established the ultrametricity conjecture upon the validity of the Ghirlanda-Guerra (GG) identities. These identities are known in several mean-field spin glass models with Gaussian disorder, including the Sherrington-Kirkpatrick (SK) model and the mixed $p$-spin models. They were first proved in \cite{GG} on average over the temperature parameters and later, in a similar formulation, by introducing a perturbation term to the Hamiltonian. For generic mixed spin-models they are known in a strong sense without perturbation \cite{Pan2010}. 

The first main goal of this paper is to remove the hypothesis of Gaussian disorder and to show that these identities are universal. We prove that if a mixed $p$-spin model with Gaussian disorder satisfies the GG identities, then these identities hold in the same model with any disorder that matches the first four moments of a standard normal. This moment assumption is reduced to matching two moments if the model does not contain both $2$-spin and $3$-spin interactions. This result combined with the main theorem of \cite{Panchenko} establishes ultrametricity under these assumptions. In fact, we obtain universality for ultrametricity (and for the GG identities) as a direct consequence of universality for the Gibbs measures of these spin systems (see Theorem 4).

A second main direction in this manuscript is to establish chaos phenomena in a non-Gaussian environment. Chaos is a classical old problem in spin glasses. It arose from the discovery that in some models, a small change in the external parameters, such as the disorder or the temperature will result in a dramatic change to the overall energy landscape. This phenomenon was first predicted by Fisher and Huse \cite{FHuse} for the EA  model, although it seems the term chaos first appeared in the famous paper of Bray and Moore \cite{BM} on disorder chaos for the SK model. It has attracted a lot of recent attention from mathematicians. Chaos in disorder for mixed even-spin models was established in the work of Chatterjee \cite{Chatterjee09} and Chen \cite{Chen11}. Temperature chaos, a more intricate subject according to physicists (see Crisanti and Rizzo \cite{CriRizzo03} for instance),  was obtained by Chen and Panchenko \cite{Chen12} and Chen \cite{Chen13}. Despite the remarkable progress, all these results require Gaussian disorder for both original and perturbed Hamiltonian. The second main goal of this paper is to remove this assumption. We establish both temperature and disorder chaos in the mixed $p$-spin model for environments with the same moment assumptions as in the last paragraph.
%
%

Our methods also allow us to study concentration and other universal properties in spin systems under the assumption of an i.i.d. environment with mean zero and variance one. In the mixed $p$-spin, we show that the magnetization and the disorder chaos cross overlap are quenched self-averaged. In the EA model and in the random field Ising model we establish self-averaging of the magnetization under a small external field perturbation. Last, in the EA model, we prove quenched self-averaging for the bond overlap in the problem of disorder chaos if the decoupling rate of the environments between the two systems converges to zero as the system size tends to infinity. This  extends the work of Chen and Panchenko \cite{Chen14} where these results were obtained in the Gaussian environment.

Rigorous progress on universality of spin glasses was obtained in the past. It is known that the limiting free energy for the SK does not depend on the particular distribution of the environment. This result was first obtained by Talagrand \cite{Tal02} who showed that Bernoulli and Gaussian disorder share the same limiting free energy and then generalized in two beautiful papers, one by Guerra and Toninelli \cite{GuerraToninelli} under assumption of symmetric laws with finite fourth moment and another by Carmona and Hu \cite{CHu} only assuming mean zero and finite third moment for the environment. The third moment condition was reduced to finite second moment in Chatterjee \cite{Chatt2nd}. It was also extended to other types of mean-field models, for instance, the bipartite model by Genovese \cite{Genovese}. To the best of our knowledge, no universality results were obtained in the past at the level of the Gibbs measure.

The method of our proofs is as follows. As in the papers dealing with the limiting free energy, our first step is inspired by Guerra's interpolation technique: we apply an approximate integration by parts lemma to an appropriate observable (that changes depending on the problem). The more moments we assume for the disorder, the better is this approximation. The core of the argument and the main novelty of our proofs is on how we handle and control the error terms coming from these approximations. We do this by estimating appropriate derivatives and although not very enlightening, the proof goes through an unavoidable sequence of careful computations. 

In this regard, we hope to help the reader through the organization of the paper. In the next section, we show how the method works, where the computations are significantly simpler. There, we prove self-averaging of the magnetization in the mixed $p$-spin model, EA and random field Ising model. In Section \ref{sec2}, we raise the stakes and prove universality of quenched disorder chaos.
Our main results, universality of ultrametricity and temperature and disorder chaos, are in Section \ref{sec4}, where universality of the Gibbs measure is established. 

\subsection{Acknowledgments} Both authors would like to thank the 2014 Midwest Probability Colloquium where part of this work was discussed. W.-K. C. thank D. Panchenko for previous conversations.  The research of A. A. is supported by NSF Grant DMS-1407554.

\section{Quenched self-averaging of the magnetization}\label{subsec1}

Let $\Sigma$ be a finite set and $\mu$ be a random measure on $\Sigma.$ For a given countable set $E$, consider a family of measurable functions $(f_e)_{e\in E}$ with $|f_e|\leq 1$ on $\Sigma$. Let $(y_e)_{e\in E}$ be independent random variables with mean zero and variance one. These are all independent of $\mu.$ We consider the Hamiltonian for $\gamma\geq 0,$
\begin{align}
\label{eq7}
H_y(\sigma)=\gamma\sum_{e\in E}y_ef_e(\sigma)
\end{align}
and define its Gibbs measure as
\begin{align}\label{eq8}
G_y(\sigma)=\frac{\exp H_y(\sigma)d\mu(\sigma)}{Z_y},
\end{align}
where $Z_y$ is the normalizing factor. Denote by the Gibbs average by $\la \cdot\ra_y$ and define the magnetization by $$
M(\sigma)=\frac{1}{|E|}\sum_{e\in E}f_e(\sigma).
$$

\begin{theorem}\label{thm3}
Let $|E|$ be finite and $\gamma>0$. For any $K\geq 1,$
\begin{align}\label{thm3:eq2}
\e\la(M(\sigma)-\la M(\sigma)\ra_y)^2\ra_y&\leq I(K)+\frac{1}{\gamma\sqrt{|E|}},
\end{align}
where
\begin{align*}
I(K):=\frac{8}{|E|}\sum_{e\in E}\e[|y_e|^2;|y_e|\geq K]+9\gamma K.
\end{align*}
In addition, if $$B_3:=\max_{e\in E}\e|y_e|^3 < \infty,$$ then
\begin{align}
\label{thm3:eq1}
\e\la(M(\sigma)-\la M(\sigma)\ra_y)^2\ra_y&\leq 9B_3\gamma+\frac{1}{\gamma\sqrt{|E|}}.
\end{align}
\end{theorem}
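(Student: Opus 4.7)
The plan is to start from the exact identity
$$\la(M-\la M\ra_y)^2\ra_y = \frac{1}{\gamma|E|}\sum_{e\in E}\partial_{y_e}\la M\ra_y,$$
which follows by combining the Gibbs derivative $\partial_{y_e}\la M\ra_y=\gamma(\la Mf_e\ra_y-\la M\ra_y\la f_e\ra_y)$ with the telescoping $\sum_{e\in E}(\la Mf_e\ra_y-\la M\ra_y\la f_e\ra_y)=|E|\la(M-\la M\ra_y)^2\ra_y$. Taking expectation reduces matters to bounding $\frac{1}{\gamma|E|}\sum_e\e[\partial_{y_e}\la M\ra_y]$, and the strategy is an approximate Gaussian integration by parts: replace each $\e[\partial_{y_e}\la M\ra_y]$ by $\e[y_e\la M\ra_y]$. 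The resulting ``main term'' is
$$\frac{1}{\gamma|E|}\,\e\Bigl[\la M\ra_y\sum_{e\in E}y_e\Bigr],$$
which is at most $1/(\gamma\sqrt{|E|})$ by $|M|\leq 1$, Cauchy--Schwarz, and $\e(\sum_e y_e)^2=|E|$; this is precisely the second summand in both announced bounds.

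The substantive part is a quantitative estimate of the error from non-Gaussian $y_e$, carried out one coordinate at a time. Freezing the disorder at coordinates other than $e$ and writing $\phi(t)=\la M\ra_y|_{y_e=t}$, direct Gibbs computations give the a priori bounds
$$|\phi|\leq 1,\qquad|\phi'|\leq 2\gamma,\qquad|\phi''|\leq 6\gamma^2.$$
Using $\e y_e=0$ and $\e y_e^2=1$, two Taylor expansions around the origin produce the clean identity
$$\e[y_e\phi(y_e)]-\e[\phi'(y_e)] = \e\bigl[y_e(\phi(y_e)-\phi(0)-y_e\phi'(0))\bigr]-\e[\phi'(y_e)-\phi'(0)],$$
so the whole problem reduces to estimating these two Taylor remainders.

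The main obstacle is to make these estimates sharp without assuming any third moment on $y_e$. The idea I would use is to apply two complementary Lipschitz controls on each remainder depending on the size of $|y_e|$: the quadratic bound from $|\phi''|\leq 6\gamma^2$ on the event $\{|y_e|\leq K\}$, and the linear bound from $|\phi'|\leq 2\gamma$ on the tail $\{|y_e|>K\}$. Using $|y_e|^3\leq K y_e^2$ on the first event together with $\e|y_e|\leq 1$, and $\p(|y_e|>K)\leq\e[y_e^2;|y_e|>K]$ for $K\geq 1$ on the tail, a short combination gives
$$|\e[y_e\phi(y_e)]-\e[\phi'(y_e)]|\leq 3\gamma^2 K + 6\gamma^2 + 8\gamma\,\e[y_e^2;|y_e|\geq K].$$
Summing over $e$, dividing by $\gamma|E|$, and absorbing $6\gamma$ into $6\gamma K$ via $K\geq 1$ produces exactly $I(K)$, giving (\ref{thm3:eq2}). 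For (\ref{thm3:eq1}) under $B_3<\infty$, no truncation is needed: the $|\phi''|$-bound directly yields $|\e[y_e\phi(y_e)]-\e[\phi'(y_e)]|\leq 3\gamma^2\e|y_e|^3+6\gamma^2\e|y_e|\leq 9\gamma^2 B_3$ per edge (using $B_3\geq 1$ and $\e|y_e|\leq 1$ by Jensen), and summation produces the $9\gamma B_3$ term.
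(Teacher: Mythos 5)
Your proposal is correct and follows essentially the same route as the paper: the telescoping identity $\sum_e(\la M f_e\ra_y-\la M\ra_y\la f_e\ra_y)=|E|\la(M-\la M\ra_y)^2\ra_y$, the approximate integration by parts with truncation at level $K$, the derivative bounds $|\phi'|\leq 2\gamma$, $|\phi''|\leq 6\gamma^2$, and the Cauchy--Schwarz control of $\e|\sum_e y_e|$. The only cosmetic difference is that you re-derive the $k=2$ error estimate in place (via the two Taylor remainders and the split over $\{|y_e|\leq K\}$ vs.\ $\{|y_e|>K\}$) rather than quoting the paper's general Lemma on approximate integration by parts, and you obtain the marginally sharper constant $3\gamma^2K+6\gamma^2$ before rounding up to $9\gamma^2K$ using $K\geq 1$.
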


\begin{lemma}[Approximate integration by parts]\label{lem2}
Let $y$ be a random variable such that its first $k\geq 2$ moments match those of a Gaussian random variable. Suppose that $F\in C^{k+1}(\mathbb{R})$. 
For any $K\geq 1,$
\begin{align}
\begin{split}
\label{aip:eq1}
|\e yF(y)-\e F'(y)|&\leq\frac{4\|F^{(k-1)}\|_\infty}{(k-2)!}\e[|y|^{k};|y|\geq K]+\frac{(k+1)K}{k!}\|F^{(k)}\|_\infty \e |y|^k.
\end{split}
\end{align}
In addition, if $\e|y|^{k+1}<\infty,$ we have 
\begin{align}\label{aip:eq2}
|\e yF(y)-\e F'(y)|\leq\frac{(k+1)}{k!} \|F^{(k)}\|_\infty \e|y|^{k+1}.
\end{align}
\end{lemma}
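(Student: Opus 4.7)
The strategy is classical: Taylor expand $F$ around $0$, annihilate the polynomial part using the moment-matching hypothesis together with Gaussian integration by parts, and bound the remaining Taylor remainder. The basic identity is that for a standard Gaussian $g$ and any polynomial $Q$ one has $\e g Q(g) = \e Q'(g)$, which follows from $\phi'(x) = -x\phi(x)$. Setting $P_{k-1}(y) = \sum_{j=0}^{k-1}\tfrac{F^{(j)}(0)}{j!}y^j$ and $R = F - P_{k-1}$, the combination $yP_{k-1}(y) - P'_{k-1}(y)$ is a polynomial of degree at most $k$, so matching the first $k$ moments of $y$ and $g$ gives $\e[yP_{k-1}(y) - P'_{k-1}(y)] = 0$ and the master identity
\begin{equation*}
\e yF(y) - \e F'(y) = \e\bigl[yR(y) - R'(y)\bigr].
\end{equation*}

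For \eqref{aip:eq2}, I would apply this identity directly. The integral form of Taylor's remainder gives $|R(y)| \le \|F^{(k)}\|_\infty |y|^k/k!$ and $|R'(y)| \le \|F^{(k)}\|_\infty |y|^{k-1}/(k-1)!$. The hypothesis $\e y^2 = 1$ forces $\e|y|^{k+1} \ge 1$ by Lyapunov, and hence $\e|y|^{k-1} \le \e|y|^{k+1}$; summing the two remainder bounds then collapses to $\bigl(\tfrac{1}{k!} + \tfrac{1}{(k-1)!}\bigr)\|F^{(k)}\|_\infty \e|y|^{k+1} = \tfrac{k+1}{k!}\|F^{(k)}\|_\infty \e|y|^{k+1}$.

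For \eqref{aip:eq1}, $\e|y|^{k+1}$ may be infinite, so I would split $\e[yR(y) - R'(y)] = \e[\,\cdot\,;\,|y|\le K] + \e[\,\cdot\,;\,|y|>K]$. On $\{|y|\le K\}$ the $R$-bounds apply after the enhancements $|y|^{k+1} \le K|y|^k$ and $\e[|y|^{k-1};\,|y|\le K] \le \e|y|^{k-1} \le \e|y|^k \le K\e|y|^k$, producing the contribution $\tfrac{(k+1)K}{k!}\|F^{(k)}\|_\infty \e|y|^k$. On $\{|y|>K\}$ I would pass to the lower-degree remainder $\widetilde R = F - P_{k-2}$ via the pointwise identity
\begin{equation*}
yR(y) - R'(y) = y\widetilde R(y) - \widetilde R'(y) - F^{(k-1)}(0)\Bigl[\tfrac{y^k}{(k-1)!} - \tfrac{y^{k-2}}{(k-2)!}\Bigr];
\end{equation*}
each of the four resulting pieces is bounded by a constant multiple of $\|F^{(k-1)}\|_\infty |y|^{k}$ or $\|F^{(k-1)}\|_\infty |y|^{k-2}$, and using $|y|^{k-2} \le |y|^k$ on $\{|y|>K\ge 1\}$ together with a collection of constants yields the pointwise bound $\tfrac{4}{(k-2)!}\|F^{(k-1)}\|_\infty |y|^k$. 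Integrating over $\{|y|>K\}$ completes \eqref{aip:eq1}.

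The main obstacle is bookkeeping rather than conceptual: one juggles the two Taylor representations $P_{k-1}$ and $P_{k-2}$, carefully tracking which can be used under which integrability hypothesis, and must verify that the four pieces on the tail event $\{|y|>K\}$ assemble with constants collapsing precisely to $4/(k-2)!$ rather than a larger multiple.
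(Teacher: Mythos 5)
Your proof is correct and follows essentially the same approach as the paper's: Taylor-expand around $0$, annihilate the degree-$\le k$ polynomial part using moment matching with Gaussian integration by parts for polynomials, and split the remainder expectation across $\{|y|\le K\}$ (where the $k$-th order remainder and $\|F^{(k)}\|_\infty$ are used) and $\{|y|>K\}$ (where the $(k-1)$-th order remainder and $\|F^{(k-1)}\|_\infty$ are used). Your explicit pointwise identity $yR-R' = y\widetilde R - \widetilde R' - F^{(k-1)}(0)\bigl[\tfrac{y^k}{(k-1)!}-\tfrac{y^{k-2}}{(k-2)!}\bigr]$ is a cleaner organization of the same computation the paper performs by comparing its two Taylor expressions \eqref{app:eq2} and \eqref{app:eq1}; the only thing left implicit in your write-up is the arithmetic showing the constant on $\{|y|>K\}$ is $\tfrac{2k}{(k-1)!}\le\tfrac{4}{(k-2)!}$ for $k\ge 2$, which does check out.
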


\begin{proof}[Proof of Theorem \ref{thm3}] A direct computation gives
\begin{align*}
\frac{d}{dy_e}\la M(\sigma)\ra&=\gamma(\la f_e(\sigma)M(\sigma)\ra_y-\la f_e(\sigma)\ra\la M(\sigma)\ra_y),\\
\frac{d^2}{dy_{e}^2}\la M(\sigma)\ra&=\gamma^2 (\la f_e(\sigma)^2 M(\sigma)\ra_y-\la f_e(\sigma)M(\sigma)\ra_y\la f_e(\sigma)\ra_y)\\
&-\gamma^2(\la f_e(\sigma^1)M(\sigma^2)(f_e(\sigma^1)+f_e(\sigma^2))\ra_y-2\la f_e(\sigma^1)M(\sigma^2)\ra_y\la f_e(\sigma) \ra_y).
\end{align*}
and thus, using approximate integration by parts \eqref{aip:eq1} with $k=2$, we obtain that for any $K\geq 1,$
\begin{align*}
&|\e y_e\la M(\sigma)\ra_y-\gamma\e(\la f_e(\sigma)M(\sigma)\ra_y-\la f_e(\sigma)\ra_y\la M(\sigma)\ra_y) |\leq I_e(K),
\end{align*}
where 
\begin{align*}
I_e(K)&:=8\gamma \e[|y_e|^2;|y_e|\geq K]+9\gamma^2K.
\end{align*}
Dividing this inequality by $\gamma|E|$ and summing over all $e\in E$, the triangle inequality gives
\begin{align*}
\e \la (M(\sigma)-\la M(\sigma)\ra_y)^2\ra_y&\leq \frac{1}{\gamma|E|}\sum_{e\in E}I_e(K)+\frac{1}{\gamma|E|}\e\Bigl|\sum_{e\in E}y_e\Bigr||\la M(\sigma)\ra_y|\\
&\leq I(K)+\frac{1}{\gamma|E|}\e\Bigl|\sum_{e\in E}y_e\Bigr|\\
&\leq I(K)+\frac{1}{\gamma \sqrt{|E|}},
\end{align*}
where in the last inequality, we used Jensen's inequality.
Similarly, if $B_3<\infty$, using \eqref{aip:eq2} with $k=2$ gives
\begin{align*}
&|\e y_e\la M(\sigma)\ra_y-\gamma\e(\la f_e(\sigma)M(\sigma)\ra_y-\la f_e(\sigma)\ra_y\la M(\sigma)\ra_y) |\leq 9\gamma^2\e|y_e|^3
\end{align*}
and therefore,
\begin{align*}
\e \la (M(\sigma)-\la M(\sigma)\ra_y)^2\ra_y&\leq \frac{9\gamma}{|E|}\sum_{e\in E}\e|y_e|^3+\frac{1}{\gamma|E|}\e\Bigl|\sum_{e\in E}y_e\Bigr||\la M(\sigma)\ra_y|\\
&\leq 9\gamma B_3+\frac{1}{\gamma|E|}\e\Bigl|\sum_{e\in E}y_e\Bigr|\\
&\leq 9\gamma B_3+\frac{1}{\gamma\sqrt{|E|}}.
\end{align*}
This completes our proof.
\end{proof}

Now we illustrate the applications of Theorem \ref{thm3} by considering three models: the mixed $p$-spin model, the EA model and the random field Ising model.

\begin{example}[mixed $p$-spin model] \rm Let $(\beta_p)_{p\geq 2}$ be a sequence of nonnegative numbers with $\sum_{p\geq 2}2^p\beta_p^2<\infty$ and $h\in\mathbb{R}$. For any $N\geq 1$, the mixed $p$-spin model is defined as 
$$
X(\sigma)=\sum_{p\geq 2}\beta_pX_{p}(\sigma)+h\sum_{i=1}^N\sigma_i,
$$
for all $\sigma=(\sigma_1,\ldots,\sigma_N)\in\Sigma_N:=\{-1,1\}^N,$ where $X_{p}$ is the pure $p$-spin Hamiltonian, $$
X_{p}(\sigma)=\frac{1}{N^{(p-1)/2}}\sum_{1\leq i_1,\ldots,i_p\leq N}y_{i_1,\ldots,i_p}\sigma_{i_1}\cdots\sigma_{i_p}.
$$
Here $y_{i_1,\ldots,i_p}$'s are independent random variables with mean zero and variance one. We denote the Gibbs measure and the Gibbs expectation of the mixed $p$-spin model as $G$ and $\la\cdot\ra.$ Suppose that $\beta_p>0$ for some $p\geq 2$. Set $E=\{1,\ldots,N\}^p$ and $\gamma=\beta_pN^{-(p-1)/2}.$ Define $y_e=y_{i_1,\ldots,i_p}$ and $f_e(\sigma)=\sigma_{i_1}\cdots\sigma_{i_p}$ if $e=(i_1,\ldots,i_p)\in E.$ Then using the notations from above, $G$ can be rewritten as
\begin{align*}
G(\sigma)&=\frac{\exp H_y(\sigma)d\mu(\sigma)}{Z_y},
\end{align*}
where $$
d\mu(\sigma)=\exp(X(\sigma)-H_y(\sigma)).
$$
Let $m(\sigma)=\sum_{i=1}^N\sigma_i/N.$ Assume only mean zero and variance one for $(y_e)_{e\in E}$, we have that for any $\varepsilon>0$, letting $K=\varepsilon\gamma^{-1}$ and using \eqref{thm3:eq2} lead to
\begin{align*}
\e \la (m(\sigma)^p-\la m(\sigma)^p\ra)^2\ra
&\leq \frac{8\sum_{e\in E}\e[|y_e|^2;|y_e|\geq \varepsilon\gamma^{-1}]}{|E|}+9\varepsilon +\frac{1}{\beta_p\sqrt{N}},
\end{align*}
which gives $\lim_{N\rightarrow\infty}\e \la (m(\sigma)^p-\la m(\sigma)^p\ra)^2\ra=0$ provided the first term on the right-hand side converges to zero as $N$ tends to infinity for any $\varepsilon>0$. This includes for instance the case when $(y_e)_{e\in E}$ are identically distributed. If we have finite third moment assumption $B_3<\infty,$ then we can further obtain the rate of convergence from \eqref{thm3:eq1},
\begin{align*}
\e \la (m(\sigma)^p-\la m(\sigma)^p\ra)^2\ra&\leq \frac{9\beta_pB_3}{N^{(p-1)/2}}+\frac{1}{\beta_p\sqrt{N}}\leq \frac{1}{\sqrt{N}}\Bigl(9\beta_pB_3+\frac{1}{\beta_p}\Bigr).
\end{align*} 
This inequality extends the result obtained in Gaussian disorder in \cite[Example 5]{Chen14}, where the upper bound is $1/\beta_p\sqrt{N}.$

\end{example}

\begin{example}[EA model] \rm Let $A$ be a finite undirected graph with vertex set $v(A)$ and edge set $e(A)$. The EA model with temperature $\beta$ and external field $\gamma$ is defined as
$$
X(\sigma)=\beta\sum_{(i,j)\in e(A)}y_{i,j}\sigma_i\sigma_j+\gamma\sum_{i\in v(A)}y_i\sigma_i
$$
for $\sigma\in\{-1,+1\}^{|v(A)|},$ where $(y_{i,j})_{(i,j)\in e(A)}$ and $(y_i)_{i\in v(A)}$ are independent random variables with mean zero and variance one. Let $G$ and $\la\cdot\ra$ be the Gibbs measure and Gibbs expectation associated to $X$. Set $E=v(A)$ and also $y_e=y_i$ and $f_e(\sigma)=\sigma_i$ if $e=i.$ Then we can rewrite
\begin{align*}
G(\sigma)&=\frac{\exp H_y(\sigma)\mu(\sigma)}{Z_y},
\end{align*} 
where $$
\mu(\sigma)=\exp (X(\sigma)-H_y(\sigma)).
$$
Let $m(\sigma):=|v(A)|^{-1}\sum_{i\in v(A)}\sigma_i.$ For any $\varepsilon>0,$ applying \eqref{thm3:eq2} with $K=\varepsilon\gamma^{-1}$ implies
\begin{align}\label{ex2:eq1}
\e\la(m(\sigma)-\la m(\sigma)\ra)^2 \ra\leq \frac{8\sum_{e\in E}\e[|y_e|^2;|y_e|\geq \varepsilon\gamma^{-1}]}{|E|}+9\varepsilon+\frac{1}{\gamma\sqrt{|v(A)|}}.
\end{align}
If we assume that the external field is a small perturbation with the following decay rate,
\begin{align}
\label{ex2:eq2}
\lim_{|v(A)|\rightarrow\infty}\gamma= 0,\,\,\lim_{|v(A)|\rightarrow\infty} \gamma\sqrt{|v(A)|}=\infty,\,\,\lim_{|v(A)|\rightarrow\infty}\frac{\sum_{e\in E}\e[|y_e|^2;|y_e|\geq \varepsilon\gamma^{-1}]}{|E|}=0
\end{align}
for any $\varepsilon>0,$ then the left-hand side of \eqref{ex2:eq1} converges to zero and thus, the magnetization is concentrated near its Gibbs average.

\end{example}

\begin{example}[Random field Ising model] \rm Let $A$ be the undirected graph with vertex set $v(A)=\mathbb{Z}^d\cap[0,N]^d$ and edge set $e(A)=\{(i,j):i,j\in v(A)\,\,\mbox{with}\,\,|i-j|=1\}.$ Let $\beta,\gamma>0.$ The Hamiltonian of the random field Ising model on $A$ is defined as
$$
X(\sigma)=\beta\sum_{(i,j)\in e(A)}\sigma_i\sigma_j+\gamma\sum_{i\in v(A)}y_i\sigma_i
$$
for $\sigma\in \{-1,+1\}^{v(A)},$ where $(y_i)_{i\in v(A)}$ are independent random variables with mean zero and variance one. As before, we use $\la\cdot\ra$ to denote the Gibbs expectation with respect to the Hamiltonian. One important feature in this model is that its spin correlation satisfies the FKG inequality, 
\begin{align}\label{fkg}
\la \sigma_i\sigma_j\ra\geq \la \sigma_i\ra\la \sigma_j\ra
\end{align}
for all $i,j\in v(A).$ This implies that the site overlap $R_{1,2}=|v(A)|^{-1}\sum_{i\in v(A)}\sigma_i^1\sigma_i^2$ satisfies 
\begin{align*}
\e \la(R_{1,2}-\la R_{1,2}\ra)^2\ra 
&= 
\frac{1}{|v(A)|^2}\sum_{i,j} \e \bigl( \la \sigma_i \sigma_j\ra^2 -\la\sigma_i\ra^2 \la\sigma_j\ra^2 \bigr)
\\
&= \frac{1}{|v(A)|^2}\sum_{i,j} \e \bigl| \la \sigma_i \sigma_j -\la\sigma_i\ra \la\sigma_j\ra \bigr| 
\bigl| \la \sigma_i \sigma_j + \la\sigma_i\ra \la\sigma_j\ra \bigr|
\\
&\leq \frac{2}{|v(A)|^2}\sum_{i,j}\e \bigl| \la\sigma_i\sigma_j\ra-\la\sigma_i\ra\la\sigma_i\ra \bigr|
\\
&=\frac{2}{|v(A)|^2}\sum_{i,j}\e \bigl( \la\sigma_i\sigma_j\ra-\la\sigma_i\ra\la\sigma_j\ra \bigr)\\
&=2\e\la(m(\sigma)-\la m(\sigma)\ra)^2,
\end{align*}
where the third equality used the FKG inequality \eqref{fkg}. Following the same reason as Example 2, if the external field is a small perturbation with decay rate \eqref{ex2:eq2}, then the magnetization is self-averaged, which implies the self-averaging of the overlap by using the inequality above.
\end{example}

\section{Quenched disorder chaos}\label{sec2}

In this section we prove quenched disorder chaos. Recall the notations at the beginning of Section \ref{subsec1}. Let $(y_{1,e})_{e\in E}$ and $(y_{2,e})_{e\in E}$ be independent random variables with mean zero and variance one. We assume that these are also independent of $(y_e)_{e\in E}$. For $0\leq t\leq 1,$ we consider the Hamiltonians on $\Sigma,$
\begin{align}
\begin{split}\label{eq5}
H_{1,y}(\rho)&=\gamma_1\sum_{e\in E}f_e(\rho)(\sqrt{t}y_e+\sqrt{1-t}y_{1,e}),\\
H_{2,y}(\tau)&=\gamma_2\sum_{e\in E}f_e(\tau)(\sqrt{t}y_e+\sqrt{1-t}y_{2,e}).
\end{split}
\end{align}
Note that if $\gamma_1=\gamma_2$ and $t=1,$ then the two systems are identically the same. In the disorder chaos problem, we will be interested in understanding how the coupled system behaves when $H_{1,y}$ and $H_{2,y}$ are slightly decoupled through the parameter $t<1.$ Let $\mu_1,\mu_2$ be two random measures on $\Sigma$. Set the Gibbs measures as
\begin{align}
\begin{split}\label{eq6}
dG_{1,y}(\rho)&=\frac{\exp H_{1,y}(\rho)d\mu_1(\rho)}{Z_{1,y}},\\
dG_{2,y}(\tau)&=\frac{\exp H_{2,y}(\tau)d\mu_2(\tau)}{Z_{2,y}},
\end{split}
\end{align}
where $\gamma_1,\gamma_2\geq 0$ and $Z_{1,y},Z_{2,y}$ are the normalizing factors. Denote by $(\rho^\ell,\tau^\ell)_{\ell\geq 1}$ a sequence of i.i.d. samplings from the product measure $G_{1,y}\times G_{2,y}$. We shall again use the notation $\la\cdot\ra_y$ to stand for the Gibbs expectation but with respect to $\prod_{\ell\geq 1}(G_{1,y}\times G_{2,y}).$ As explained in the introduction, the quantity of great interest is the overlap between the two systems,
\begin{align*}
Q_{\ell,\ell'}&=\frac{1}{|E|}\sum_{e\in E}f_e(\rho^\ell)f_e(\tau^{\ell'}).
\end{align*}
We formulate the main result of this section in a way that will cover the mixed $p$-spin model and the EA models as examples:

\begin{theorem}\label{thm4}
Let $\gamma_1,\gamma_2>0$ and $t\in [0,1)$. For any $K\geq 1,$ we have
\begin{align}\label{thm4:eq1}
\e \la(Q_{1,1}-\la Q_{1,1}\ra_y)^2\ra_y&\leq I(K)+\frac{4(\gamma_1+\gamma_2)}{\gamma_1\gamma_2\sqrt{|E|(1-t)}},
\end{align}
where 
\begin{align*}
I(K)&:=\frac{16}{|E|}\sum_{e\in E}\bigl(\e[|y_{1,e}|^2;|y_{1,e}|\geq K]+\e[|y_{2,e}|^2;|y_{2,e}|\geq K]\bigr)+36K(\gamma_1+\gamma_2)\sqrt{1-t}.
\end{align*}
If $B_{1,3}:=\max_{e\in E}\e |y_{1,e}|^3$ and $B_{2,3}:=\max_{e\in E}\e |y_{2,e}|^3$ are finite, then
\begin{align}\label{thm4:eq2}
\e \la(Q_{1,1}-\la Q_{1,1}\ra_y)^2\ra_y&\leq 36(B_{1,3}\gamma_1+B_{2,3}\gamma_2)\sqrt{1-t}+\frac{4(\gamma_1+\gamma_2)}{\gamma_1\gamma_2\sqrt{|E|(1-t)}}.
\end{align} 
\end{theorem}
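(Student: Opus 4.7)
The plan is to adapt the approach of Theorem~\ref{thm3} to the coupled setting by splitting the Gibbs variance $V := \la(Q_{1,1}-\la Q_{1,1}\ra_y)^2\ra_y$ into two pieces, one handled by integration by parts on the $y_{1,\cdot}$ disorder and the other on the $y_{2,\cdot}$ disorder. Writing $a_e = \la f_e(\rho)\ra_y$, $b_e = \la f_e(\tau)\ra_y$, $a_{e,e'} = \la f_e(\rho)f_{e'}(\rho)\ra_y$, $b_{e,e'} = \la f_e(\tau)f_{e'}(\tau)\ra_y$ and using that $\rho,\tau$ are independent under $G_{1,y}\otimes G_{2,y}$, one has $V = \frac{1}{|E|^2}\sum_{e,e'}(a_{e,e'}b_{e,e'} - a_e a_{e'} b_e b_{e'})$. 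The identity
\begin{align*}
a_{e,e'}b_{e,e'} - a_e a_{e'}b_e b_{e'} = (a_{e,e'}-a_e a_{e'}) b_e b_{e'} + a_{e,e'}(b_{e,e'}-b_e b_{e'})
\end{align*}
writes $V = T_1 + U_\tau$, where $U_\tau := \frac{1}{|E|^2}\sum_{e,e'} a_{e,e'}(b_{e,e'}-b_e b_{e'}) = \la\mathrm{Var}_{G_2}(Q_{1,1}|\rho)\ra_{G_1} \geq 0$.

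First I would bound $|\e T_1|$ by IBP on $y_{1,e'}$. Since $a_{e,e'}-a_e a_{e'} = \frac{1}{\gamma_1\sqrt{1-t}}\frac{d a_e}{d y_{1,e'}}$ and $b_e, b_{e'}$ do not depend on $y_{1,e'}$, we obtain $T_1 = \frac{1}{\gamma_1\sqrt{1-t}|E|^2}\sum_{e,e'}\frac{d}{d y_{1,e'}}(a_e b_e b_{e'})$. Applying Lemma~\ref{lem2} with $k=2$, and bounding the first and second derivatives of $a_e$ in $y_{1,e'}$ uniformly by $O(\gamma_1\sqrt{1-t})$ and $O(\gamma_1^2(1-t))$ respectively, the IBP errors collect into an $I_1(K)$-type term of the form $\frac{c}{|E|}\sum_e \e[|y_{1,e}|^2;|y_{1,e}|\ge K] + c'K\gamma_1\sqrt{1-t}$. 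The resulting main term is $\frac{1}{\gamma_1\sqrt{1-t}|E|}\e[\la Q_{1,1}\ra_y\sum_{e'}y_{1,e'}b_{e'}]$; since $|\la Q_{1,1}\ra_y|\leq 1$ and $b_{e'}$ is independent of $y_{1,e'}$, Jensen's inequality gives $\e|\sum_{e'}y_{1,e'}b_{e'}|\leq\sqrt{|E|}$, yielding a contribution of order $\frac{1}{\gamma_1\sqrt{(1-t)|E|}}$.

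For $\e U_\tau$, the main obstacle is that a naive IBP on $y_{2,e'}$ of the analogous representation produces the weight $\sum_e a_{e,e'}b_e = |E|\la f_{e'}(\rho) Q_{1,1}\ra_y$, which depends on $y_{2,e'}$ and breaks the independence required for the Jensen step. The remedy is to first condition on $\rho$: then the weights $f_e(\rho)\in[-1,1]$ are deterministic, and one can rewrite
\begin{align*}
\mathrm{Var}_{G_2}(Q_{1,1}|\rho) = \frac{1}{\gamma_2\sqrt{1-t}|E|^2}\sum_{e'}f_{e'}(\rho)\frac{d}{d y_{2,e'}}\Bigl[\sum_e f_e(\rho) b_e\Bigr].
\end{align*}
Now IBP on $y_{2,e'}$ followed by the Jensen bound $\e_{y_2}|\sum_{e'}f_{e'}(\rho) y_{2,e'}|\leq \sqrt{|E|}$ (valid because $f_{e'}(\rho)$ is $y_2$-independent given $\rho$), combined with $|\frac{1}{|E|}\sum_e f_e(\rho)b_e|\leq 1$, yields $\e_{y_2}[\mathrm{Var}_{G_2}(Q_{1,1}|\rho)|\rho, y, y_1] \leq I_2(K) + \frac{1}{\gamma_2\sqrt{(1-t)|E|}}$ uniformly in $\rho$; integrating under $G_1$ and over the remaining disorder preserves the bound, so $\e U_\tau$ is controlled by the same quantity. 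Since $U_\tau\geq 0$, we then get $\e V = \e T_1 + \e U_\tau \leq |\e T_1| + \e U_\tau$, producing \eqref{thm4:eq1} up to absolute constants; the second estimate \eqref{thm4:eq2} follows by using \eqref{aip:eq2} in place of \eqref{aip:eq1} in both IBP applications.
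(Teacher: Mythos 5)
Your proposal is correct, and it takes a genuinely different route from the paper, so let me spell out the comparison.

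The paper never decomposes the Gibbs variance directly. Instead it uses a replica symmetrization: since $Q_{1,1}$ and $Q_{2,2}$ are i.i.d.\ under $\la\cdot\ra_y$, one has
$\e\la(Q_{1,1}-\la Q_{1,1}\ra_y)^2\ra_y\leq \e\la(Q_{1,1}-Q_{2,2})^2\ra_y\leq 2\e\la(Q_{1,1}-Q_{1,2})^2\ra_y+2\e\la(Q_{1,2}-Q_{2,2})^2\ra_y
=4\e\la Q_{1,1}^2-Q_{1,1}Q_{1,2}\ra_y+4\e\la Q_{2,2}^2-Q_{2,2}Q_{1,2}\ra_y$,
and then bounds each of the two terms via IBP on $y_{2,e}$ (resp.\ $y_{1,e}$) applied to the observable $Q_{1,1}f_e(\rho^1)$ (resp.\ $Q_{2,2}f_e(\tau^1)$), followed by the same Cauchy--Schwarz step you use. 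In your notation the two terms are $\e U_\tau$ and $\e U_\rho:=\frac{1}{|E|^2}\sum_{e,e'}\e[(a_{e,e'}-a_ea_{e'})b_{e,e'}]$, so the paper establishes $\e V\leq 4\e U_\tau+4\e U_\rho$, whereas you establish the exact identity $\e V=\e T_1+\e U_\tau$ coming from the law of total variance $V=\mathrm{Var}_{G_1}(\la Q_{1,1}\ra_{G_2})+\la\mathrm{Var}_{G_2}(Q_{1,1}\,|\,\rho)\ra_{G_1}$ over the product structure $G_{1,y}\otimes G_{2,y}$. Your decomposition is tighter (one can even check $0\leq T_1\leq U_\rho$ by the Schur product theorem, and there is no factor $4$ to pay), so you get better constants; the price is that the $T_1$ piece requires differentiating the Gibbs-averaged quantity $a_e b_e b_{e'}$ rather than a raw replica observable, and the $U_\tau$ piece requires the extra Fubini step of conditioning on $\rho$ before integrating by parts, both of which you handle correctly. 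The paper's route avoids those steps by treating the two pieces symmetrically, at the cost of the loose symmetrization and triangle-inequality constants. Either way the error terms end up of the same form, and your passage from \eqref{aip:eq1} to \eqref{aip:eq2} for \eqref{thm4:eq2} is exactly the paper's.
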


\begin{proof}
Note that the first and second derivatives of $\la Q_{1,1}f_e(\rho^1)\ra_y$ with respect to the variable $y_{2,e}$ equal 
\begin{align*}
\frac{d}{dy_{2,e}}\la Q_{1,1}f_e(\rho^1)\ra_y&=\sqrt{1-t}\gamma_2\la Q_{1,1}f_e(\rho^1)(f_e(\tau^1)-f_e(\tau^2))\ra_y,\\
\frac{d^2}{dy_{2,e}^2}\la  Q_{1,1}f_e(\rho^1)\ra_y&=(1-t)\gamma_2^2\la Q_{1,1}f_e(\rho^1)(f_e(\tau^1)^2-f_e(\tau^1)f_e(\tau^2))\ra_y\\
&-(1-t)\gamma_2^2\la Q_{1,1}f_e(\rho^1)f_e(\tau^2)(f_e(\tau^1)+f_e(\tau^2)-f_e(\tau^3)-f_e(\tau^4))\ra_y.
\end{align*}
From the approximate integration by parts \eqref{aip:eq1},
\begin{align*}
|\e \la y_{2,e}Q_{1,1}f_e(\rho^1)\ra_y-\gamma_2\sqrt{1-t}\e\la Q_{1,1}f_e(\rho^1)(f_e(\tau^1)-f_e(\tau^2))\ra_y|&\leq I_{2,e}(K),
\end{align*}
where 
\begin{align*}
I_{2,e}(K)&:=4\gamma_{2}\sqrt{1-t}\cdot\e[|y_{2,e}|^2;|y_{2,e}|\geq K]+9\gamma_2^2(1-t)K.
\end{align*}
Summing over all $e$ and dividing by $|E|\gamma_2 \sqrt{1-t}$, the triangle inequality gives
\begin{align*}
\e\la (Q_{1,1}^2-Q_{1,1}Q_{1,2})\ra_y&\leq \frac{\sum_{e\in E}I_{2,e}(K)}{|E|\gamma_2\sqrt{1-t}}+\frac{1}{\gamma_2|E|\sqrt{1-t}}\e\Bigl\la Q_{1,1}\sum_{e\in E}y_{2,e}f_e(\rho^1)\Bigr\ra_y.
\end{align*}
Observe that $|\e\la Q_{1,1}\sum_{e\in E}y_{2,e}f_e(\rho^1)\ra|\leq \e\la |\sum_{e\in E}y_{2,e}f_e(\rho^1)|\ra.$ Since the Gibbs expectation is only with respect to $\rho^1$ and the disorder in Hamiltonian $H_{1,y}$ is independent of $y_{2,e},$ using conditional expectation and then the Cauchy-Schwarz inequality implies 
\begin{align*}
\Bigl|\e\Bigl\la Q_{1,1}\Bigl(\sum_{e\in E}y_{2,e}f_e(\rho^1)\Bigr)\Bigr\ra_y\Bigr|&\leq\e\Bigl\la \e_2\Bigl|\sum_{e\in E}y_{2,e}f_e(\rho^1)\Bigr|\Bigr\ra\\
&\leq \e\Bigl\la \Bigl(\e_2\Bigl|\sum_{e\in E}y_{2,e}f_e(\rho^1)\Bigr|^2\Bigr)^{1/2}\Bigr\ra\\
&=\e \Bigl\la\Bigl(\sum_{e\in E}f_e(\rho^1)^2\Bigr)^{1/2}\Bigr\ra\leq \sqrt{|E|},
\end{align*}
where $\e_2$ is the expectation with respect to the randomness $(y_{2,e})_{e\in E}.$
Thus, we have
\begin{align*}
\e\la (Q_{1,1}^2-Q_{1,1}Q_{1,2})\ra_y&\leq\frac{\sum_{e\in E}I_{2,e}(K)}{|E|\gamma_2\sqrt{1-t}}+\frac{1}{\gamma_2\sqrt{|E|(1-t)}}
\end{align*}
and similarly,
\begin{align*}
\e\la (Q_{2,2}^2-Q_{2,2}Q_{1,2})\ra_y&\leq \frac{\sum_{e\in E}I_{1,e}(K)}{|E|\gamma_1\sqrt{1-t}}+\frac{1}{\gamma_1\sqrt{|E|(1-t)}}.
\end{align*}
Combining these two inequalities lead to
\begin{align*}
\e \la(Q_{1,1}-\la Q_{1,1}\ra_y)^2\ra_y&\leq \e\la (Q_{1,1}-Q_{2,2})^2\ra_y\\
&\leq 2\e \la(Q_{1,1}-Q_{1,2})^2\ra_y+2\e\la (Q_{1,2}-Q_{2,2})^2\ra_y\\
&= 4\e\la(Q_{1,1}^2-Q_{1,1}Q_{1,2})\ra_y+4\e\la(Q_{2,2}^2-Q_{2,2}Q_{1,2})\ra_y\\
&\leq I(K)+\frac{4(\gamma_1+\gamma_2)}{\gamma_1\gamma_2\sqrt{|E|(1-t)}},
\end{align*}
which finishes the proof for \eqref{thm4:eq1}. As for \eqref{thm4:eq2}, it can be treated exactly in the same way by using the inequality \eqref{aip:eq2} instead.

\end{proof}

\begin{example}[Quenched disorder chaos in the mixed $p$-spin model]\label{ex:4} \rm Recall the Hamiltonians from $X$ and $X_p$ from Example 1. For each $p\geq 2,$ consider two pure $p$-spin Hamiltonians on $\Sigma_N,$
\begin{align*}
X_{1,p}(\rho)&=\frac{1}{N^{(p-1)/2}}\sum_{1\leq i_1,\ldots,i_p\leq N}y_{1,i_1,\ldots,i_p}\rho_{i_1}\cdots\rho_{i_p},\\
X_{2,p}(\tau)&=\frac{1}{N^{(p-1)/2}}\sum_{1\leq i_1,\ldots,i_p\leq N}y_{2,i_1,\ldots,i_p}\tau_{i_1}\cdots\tau_{i_p}.
\end{align*}
Here the random variables $y_{1,i_1,\ldots,i_p}$ and $y_{2,i_1,\ldots,i_p}$ are independent random variables with mean zero and variance one for all $i_1,\ldots,i_p$ and $p\geq 2.$ These are also independent of $(y_{e})_{e\in E}$. Let $(\beta_{1,p})_{p\geq 2}$ and $(\beta_{2,p})_{p\geq 2}$ be two nonnegative sequences with finite $\sum_{p\geq 2}2^p\beta_{1,p}^2$ and $\sum_{p\geq 2}2^p\beta_{2,p}^2$, $h_1,h_2\in\mathbb{R}$ and $(t_p)_{p\geq 2}$ be a sequence with $0\leq t_p\leq 1$ for $p\geq 2.$ We consider two mixed $p$-spin models,
\begin{align}\label{eq10}
X_1(\rho)&=\sum_{p\geq 2}\beta_{1,p}\bigl(\sqrt{t_p}X_p(\rho)+\sqrt{1-t_p}X_{1,p}(\rho)\bigr)+h_1\sum_{1\leq i\leq N}\rho_i,\\
X_2(\tau)&=\sum_{p\geq 2}\beta_{2,p}\bigl(\sqrt{t_p}X_p(\tau)+\sqrt{1-t_p}X_{2,p}(\tau)\bigr)+h_2\sum_{1\leq i\leq N}\tau_i.
\end{align}
and set their Gibbs measures as
\begin{align*}
G_1(\rho)&=\frac{\exp X_1(\rho)}{Z_1},\\
G_2(\tau)&=\frac{\exp X_2(\tau)}{Z_2},
\end{align*}
where $Z_1,Z_2$ are the partition functions. Let $\la\cdot\ra$ denote the Gibbs expectation with respect to $G_1\times G_2.$ Suppose that $\beta_{1,p},\beta_{2,p}>0$ and $0\leq t_p<1$ for some $p\geq 2.$ In the notations of \eqref{eq5} and \eqref{eq6}, we set $E=\{1,\ldots,N\}^p$, $t=t_p$ and for all $e=(i_1,\ldots,i_p)\in E$,
\begin{align*}
f_e(\sigma)&=\sigma_{i_1}\cdots\sigma_{i_p},\\
y_{1,e}&=y_{1,i_1,\ldots,i_p},\\
y_{2,e}&=y_{2,i_2,\ldots,i_p}.
\end{align*}
We can then rewrite
\begin{align*}
G_1(\rho)&=\frac{\exp H_{1,y}(\rho)\mu_1(\rho)}{Z_{1,y}},\\
G_2(\tau)&=\frac{\exp H_{2,y}(\tau)\mu_2(\tau)}{Z_{2,y}},
\end{align*}
where $\gamma_1=\beta_{1,p}N^{-(p-1)/2}$, $\gamma_2=\beta_{2,p}N^{-(p-1)/2}$ and
\begin{align*}
\mu_1(\rho)&=\exp (X_1(\rho)-H_{1,y}(\rho)),\\
\mu_2(\tau)&=\exp (X_2(\tau)-H_{2,y}(\tau)).
\end{align*} 
Set the overlap between the two systems as
$$
R_{1,1}=\frac{1}{N}\sum_{i=1}^N\rho_i^1\tau_i^1.
$$ 
For any $\varepsilon>0,$ applying \eqref{thm4:eq1} with $K=\varepsilon N^{(p-1)/2}$ gives
\begin{align*}
\e \la (R_{1,1}^p-\la R_{1,1}^p\ra)^2\ra&\leq\frac{16}{|E|}\sum_{e\in E}\bigl(\e[|y_{1,e}|^2;|y_{1,e}|\geq \varepsilon N^{(p-1)/2}]+\e[|y_{2,e}|^2;|y_{2,e}|\geq \varepsilon N^{(p-1)/2}]\bigr)\\
&+36\varepsilon(\beta_{1,p}+\beta_{2,p})\sqrt{1-t_p}+\frac{4(\beta_{1,p}+\beta_{2,p})}{\beta_{1,p}\beta_{2,p}\sqrt{N(1-t_p)}}.
\end{align*}
Therefore, if 
\begin{align*}
\lim_{N\rightarrow\infty}\frac{1}{|E|}\sum_{e\in E}\bigl(\e[|y_{1,e}|^2;|y_{1,e}|\geq \varepsilon N^{(p-1)/2}]+\e[|y_{2,e}|^2;|y_{2,e}|\geq \varepsilon N^{(p-1)/2}]\bigr)=0
\end{align*}
for any $\varepsilon>0,$ then $R_{1,1}^p$ is concentrated near its Gibbs average, which amounts to say that $R_{1,1}\approx \la R_{1,1}\ra$ if $p$ is odd and $|R_{1,1}|\approx\la |R_{1,1}|\ra$ if $p$ is even. In addition, if both $B_{1,3}$ and $B_{2,3}$ are finite, we can further obtain the rate of convergence through \eqref{thm4:eq2},
\begin{align*}
\e \la (R_{1,1}^p-\la R_{1,1}^p\ra)^2\ra&\leq \frac{36(B_{1,3}\beta_{1,p}+B_{2,3}\beta_{2,p})}{N^{(p-1)/2}}\sqrt{1-t_p}+\frac{4(\beta_{1,p}+\beta_{2,p})}{\beta_{1,p}\beta_{2,p}\sqrt{N(1-t_p)}}.
\end{align*}
\end{example}

\begin{example}
[Quenched disorder chaos in the EA model] \rm Recall the random variables $(y_{(i,j)})_{(i,j)\in e(A)}$ and $(y_i)_{i\in v(A)}$ in the EA model from Example~2. Consider independent random variables $(y_{1,(i,j)})_{(i,j)\in e(A)}$, $(y_{2,(i,j)})_{(i,j)\in e(A)}$, $(y_{1,i})_{i\in v(A)}$ and $(y_{2,i})_{i\in v(A)}$ with mean zero and variance one. For $\beta_1,\beta_2,h_1,h_2\geq 0$ and $0\leq t_b,t_s\leq 1,$ we consider two Hamiltonians of the EA model,
\begin{align*}
X_1(\rho)&=\beta_{1}\sum_{(i,j)\in e(A)}(\sqrt{t_b}y_{(i,j)}+\sqrt{1-t_b}y_{1,(i,j)})\rho_i\rho_j+h_1\sum_{i\in v(A)}(\sqrt{t_s}y_i+\sqrt{1-t_s}y_{1,i})\rho_i,\\
X_2(\tau)&=\beta_{2}\sum_{(i,j)\in e(A)}(\sqrt{t_b}y_{(i,j)}+\sqrt{1-t_b}y_{2,(i,j)})\tau_i\tau_j+h_2\sum_{i\in v(A)}(\sqrt{t_s}y_i+\sqrt{1-t_s}y_{2,i})\tau_i.
\end{align*}
Suppose that $\beta_1,\beta_2>0$ and $0\leq t_e<1.$ For any $\varepsilon>0,$ using \eqref{thm4:eq1} with $K=\varepsilon (1-t_e)^{-1},$ then
\begin{align*}
\e \la(Q_{1,1}^b-\la Q_{1,1}^b\ra)^2 \ra
&\leq \frac{16}{|E|}\sum_{e\in E}\e[|y_{1,e}|^2;|y_{1,e}|\geq \varepsilon (1-t_e)^{-1}]\\
&+\frac{16}{|E|}\sum_{e\in E}\e[|y_{2,e}|^2;|y_{2,e}|\geq \varepsilon (1-t_e)^{-1}]\\
&+36\varepsilon(\beta_{1}+\beta_{2})+\frac{(\beta_1+\beta_2)}{\beta_1\beta_2\sqrt{|e(A)|(1-t_b)}},
\end{align*}
where $$Q_{1,1}^b:=\frac{1}{|e(A)|}\sum_{(i,j)\in \in e(A)}\rho_i^1\tau_j^1\rho_i^1\tau_j^1$$ is the bond overlap. Therefore, if the decoupling rate is a small perturbation satisfying
\begin{align*}
\lim_{|e(A)|\rightarrow \infty}t_b&=1,\,\,\lim_{|e(A)|\rightarrow \infty}\sqrt{|e(A)|(1-t_b)}=\infty
\end{align*}
and for any $\varepsilon>0,$
\begin{align*}
\lim_{|e(A)|\rightarrow\infty}\frac{1}{|E|}\sum_{e\in E}\bigl(\e[|y_{1,e}|^2;|y_{1,e}|\geq \varepsilon (1-t_b)^{-1}]+\e[|y_{2,e}|^2;|y_{2,e}|\geq \varepsilon (1-t_b)^{-1}]\bigr)=0,
\end{align*}
then the bond overlap is self-averaged. One may argue exactly in the same way to obtain the self-averaging property of the site overlap $Q_{1,1}^s:=|v(A)|^{-1}\sum_{i\in v(A)}\rho_i^1\tau_i^1$.
\end{example}

\section{Universality of the Gibbs measure}\label{sec4}

In this section, we will establish universality for the Gibbs measure in the mixed $p$-spin model with mild moment matching assumptions. This naturally leads to universality in chaos phenomena and ultrametricity. 

\subsection{Ultrametricity}
Recall the Hamiltonian $H_y$ from \eqref{eq5}. For $k\geq 2,$ we assume that $(y_e)_{e\in E}$ in $H_y$ are independent of each other such that their first $k$ moments match those of a standard Gaussian random variable. In addition, we allow $\gamma$ in $H_y$ to depend on $e\in E$. In other words, we will be concerned with the generalized version of \eqref{eq5},
\begin{align*}
H_y(\si)&=\sum_{e\in E}\gamma_ey_ef_e(\si),
\end{align*}
where $\sum_{e\in E}\gamma_e^2<\infty.$ We again use $G_y$ and $\la\cdot\ra_y$ to denote the Gibbs measure and the Gibbs expectation associated to this Hamiltonian. In particular, if $(y_e)_{e\in E}$ are i.i.d. standard Gaussian random variables $(g_e)_{e\in E}$, we shall denote all these by $H_g$, $G_g$ and $\la\cdot\ra_g$. The results of this subsection will be consequences of the following theorem.

\begin{theorem}\label{thm2}
Suppose that $L$ is a measurable function depending on $(\sigma^{\ell})_{1\leq \ell\leq n}$ with $\|L\|_\infty\leq 1.$ For any sequence $(K_{e})_{e\in E}$ with $K_e\geq 1$ for $e\in E,$ we have
\begin{align}\label{thm2:eq1}
|\e\la L\ra_g-\e\la L\ra_y|&\leq  I((K_e)_{e\in E}),
\end{align}
where
\begin{align*}
I((K_e)_{e\in E})&:=\frac{4nC_{k-1,2n}}{(k-2)!}\sum_{e\in E}\gamma_e^{k}\e [|y_e|^{k};|y_e|\geq K_e]+\frac{(k+1)nC_{k,2n}}{k!}\sum_{e\in E}\gamma_e^{k+1}K_e\e|y_e|^{k}.
\end{align*}
If $B_{k+1}:=\sup_{e\in E}\e|y_{e}|^{k+1}$ is finite, then
\begin{align}\label{thm2:eq2}
|\e\la L\ra_g-\e\la L\ra_y|&\leq  \frac{(k+1)nC_{k,2n}B_{k+1}}{k!}\sum_{e\in E}\gamma_e^{k+1}.
\end{align}
\end{theorem}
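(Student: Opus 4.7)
The plan is to prove Theorem \ref{thm2} via a Lindeberg-type replacement argument. Fix an enumeration $E=\{e_1,\ldots,e_M\}$ and define the hybrid disorder $Z_i:=(y_{e_1},\ldots,y_{e_i},g_{e_{i+1}},\ldots,g_{e_M})$ for $0\leq i\leq M$, so $Z_0=g$ and $Z_M=y$. Telescoping yields
\[
\e\la L\ra_g-\e\la L\ra_y=\sum_{i=1}^{M}\bigl(\e\la L\ra_{Z_{i-1}}-\e\la L\ra_{Z_i}\bigr),
\]
which reduces the problem to estimating the increment of $\e\la L\ra$ produced by a single-coordinate swap $g_{e_i}\to y_{e_i}$.

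For each $i$, I freeze all coordinates of $Z_i$ except the $e_i$-th and let $\Phi(x):=\la L\ra$ denote the Gibbs average of $L$ viewed as a function of the single disorder variable $x$ at the $e_i$-th coordinate, so that the $i$-th increment equals $\e[\Phi(g_{e_i})-\Phi(y_{e_i})]$. Since the first $k$ moments of $y_{e_i}$ and $g_{e_i}$ coincide, Taylor expanding $\Phi$ around $0$ makes all polynomial contributions up to degree $k$ cancel in expectation, leaving only an integral remainder. Applying Lemma \ref{lem2} to an appropriate single-variable function $F$ built from $\Phi$ (or, equivalently, bounding the remainder directly by truncating at $K_{e_i}$) produces a two-piece estimate of the form $\|F^{(k-1)}\|_\infty\,\e[|y_{e_i}|^{k};|y_{e_i}|\geq K_{e_i}]+K_{e_i}\|F^{(k)}\|_\infty\,\e|y_{e_i}|^{k}$, matching the two terms of $I((K_e)_{e\in E})$.

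The required derivative estimates follow by writing $\Phi(x)=A(x)/B(x)^n$, where $A$ is the $n$-replica integral in the numerator of the Gibbs average and $B$ is the partition function. Each differentiation in $x$ pulls down a factor $\gamma_{e_i}f_{e_i}$ summed over the $n$ replicas of $L$ and, via the Leibniz rule, over additional replicas produced by differentiating $B^{-n}$. Since $|L|,|f_e|\leq 1$, careful Leibniz-type bookkeeping gives $\|\Phi^{(j+1)}\|_\infty\leq n\,C_{j,2n}\,\gamma_{e_i}^{j+1}$ for an explicit combinatorial constant $C_{j,2n}$, with the outer factor $n$ reflecting the initial sum $\sum_{\ell=1}^n f_{e_i}(\sigma^\ell)$ appearing in $\Phi'$. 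Substituting these bounds into the per-step estimate and summing over $i\in\{1,\ldots,M\}$ yields \eqref{thm2:eq1}; the sharper bound \eqref{thm2:eq2}, available under $B_{k+1}<\infty$, follows identically using \eqref{aip:eq2} in place of \eqref{aip:eq1}, removing the need for truncation. The main technical obstacle is the explicit derivative bound $\|\Phi^{(j+1)}\|_\infty\leq nC_{j,2n}\gamma_{e_i}^{j+1}$ with the precise $n$- and $2n$-dependence, which requires induction on $j$ together with careful bookkeeping of the replica indices appearing in each term of the multinomial expansion of $(d/dx)^{j+1}[A/B^n]$; a secondary subtlety is verifying that the single-variable approximation at each Lindeberg step, combined with the exact Gaussian integration by parts on the $g$ side, correctly produces a bound on $\e[\Phi(g_{e_i})-\Phi(y_{e_i})]$ itself rather than on a nearby combination of $\Phi$-derivatives.
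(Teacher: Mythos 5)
Your Lindeberg--telescoping plan is a legitimate alternative to the paper's proof, which instead uses a single global interpolation
\[
H_s(\sigma)=\sum_{e\in E}\gamma_e(\sqrt{s}\,g_e+\sqrt{1-s}\,y_e)f_e(\sigma),\qquad \phi(s)=\e\la L\ra_s,
\]
bounds $|\phi'(s)|$, and integrates over $s\in[0,1]$. Lindeberg swapping and global interpolation are two implementations of the same idea (by Fubini they generate the same estimate), and the two key ingredients are also the same: the derivative control of Lemma~\ref{lem1} and the moment-matching estimate of Lemma~\ref{lem2}. So in spirit you are on the right track.

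That said, there are two concrete places where your sketch, as written, does not close. First, Lemma~\ref{lem2} does not directly apply to the quantity you need. Lemma~\ref{lem2} bounds an approximate integration by parts error $|\e yF(y)-\e F'(y)|$, whereas the $i$-th Lindeberg increment you want to control is $|\e\Phi(g_{e_i})-\e\Phi(y_{e_i})|$; these are different objects. Your parenthetical ``or, equivalently, bounding the remainder directly by truncating'' hints at the right repair, but a pure Taylor-with-truncation estimate of $\e\Phi(g)-\e\Phi(y)$ will pick up Gaussian remainder moments $\e[|g|^{k}]$ as well as $\e[|y|^{k}]$, so it does not reproduce the bound $I((K_e)_{e\in E})$, which depends only on moments of the $y_e$. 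The reason the paper's bound has no Gaussian contribution is that the interpolation derivative $\phi'(s)$ splits each summand into a $g_e$-part and a $y_e$-part; the $g_e$-part is killed exactly by Gaussian integration by parts, and only the $y_e$-part incurs the approximate IBP error of Lemma~\ref{lem2}. To get the same asymmetry in your setting, the correct move at the $i$-th step is to replace the direct Taylor argument with a one-variable interpolation $u\mapsto\sqrt{u}\,g_{e_i}+\sqrt{1-u}\,y_{e_i}$ (other coordinates frozen), differentiate in $u$, and then apply exact Gaussian IBP on the $g$ side and Lemma~\ref{lem2} on the $y$ side --- which at that point is exactly the paper's computation, applied one coordinate at a time.

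Second, the derivative bound you posit, $\|\Phi^{(j+1)}\|_\infty\leq n\,C_{j,2n}\gamma_{e_i}^{j+1}$, is not what Lemma~\ref{lem1} gives. Applied directly to $\Phi$ (a Gibbs average of a function of $n$ replicas) Lemma~\ref{lem1} gives $\|\Phi^{(j)}\|_\infty\leq\gamma_{e_i}^{j}C_{j,n}$. If instead you first write $\Phi'(x)=\gamma_{e_i}\sum_{\ell=1}^n\la L(f_{e_i}(\sigma^{\ell})-f_{e_i}(\sigma^{\ell+n}))\ra_x=2\gamma_{e_i}\sum_{\ell=1}^n\la L_{e_i,\ell}\ra_x$ with $|L_{e_i,\ell}|\leq 1$ depending on $2n$ replicas (as in the paper), then Lemma~\ref{lem1} applied to each summand yields $\|\Phi^{(j+1)}\|_\infty\leq 2n\,\gamma_{e_i}^{j+1}C_{j,2n}$, i.e.\ a factor $2n$, not $n$. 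In the paper's interpolation that extra $2$ is absorbed by the $\tfrac{1}{2\sqrt{s}}$ (resp.\ $\tfrac{1}{2\sqrt{1-s}}$) factors coming from $\partial_s\sqrt{s}$; in a raw Lindeberg-Taylor argument it does not cancel. Since the theorem is stated with explicit constants $4nC_{k-1,2n}/(k-2)!$ and $(k+1)nC_{k,2n}/k!$, this matters if you want to prove the theorem verbatim rather than up to constants. Once you switch to the per-coordinate interpolation form, all of these bookkeeping issues resolve and you recover the stated bound.
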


\begin{lemma}\label{lem1}
Let $\nu$ be a measure on $\Sigma$. Suppose that $f$ is a measurable function on $\Sigma$ with $|f|\leq 1$ and $L$ is a measurable function on $\Sigma^n$ with $\|L\|_\infty\leq 1.$ Consider the Gibbs measure $$
G(\sigma)=\frac{\exp (\gamma xf(\sigma))\nu(\sigma)}{Z},
$$
where $Z$ is the normalizing factor. Denote by $\la\cdot\ra_x$ the Gibbs expectation associated to $G.$ For any $k\geq 1,$ we have
\begin{align*}
\Bigl|\frac{d^k}{dx^k}\la L\ra_x\Bigr|&\leq \gamma^kC_{k,n},
\end{align*}
where $C_{k,n}:=2^{k(k+1)/2}n^k$.
\end{lemma}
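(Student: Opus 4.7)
The plan is to argue by induction on $k$. The single key fact, obtained by differentiating $dG(\sigma)=\exp(\gamma xf(\sigma))\nu(\sigma)/Z$ together with the identity $\partial_x Z = \gamma Z\la f\ra_x$, is that one $x$-derivative of $\la L\ra_x$ can be rewritten as a Gibbs average (over one additional replica) of another bounded observable:
\begin{equation*}
\frac{d}{dx}\la L\ra_x \;=\; \gamma\sum_{i=1}^{n}\bigl(\la Lf(\sigma^i)\ra_x - \la L\ra_x\la f\ra_x\bigr) \;=\; \gamma\,\la\widetilde L\ra_x,
\end{equation*}
where, upon enlarging the replica space from $n$ to $n+1$ (using that the Gibbs measure is a product over i.i.d.\ replicas), one defines
\begin{equation*}
\widetilde L(\sigma^1,\ldots,\sigma^{n+1}) \;:=\; \sum_{i=1}^{n} L(\sigma^1,\ldots,\sigma^n)\bigl(f(\sigma^i)-f(\sigma^{n+1})\bigr).
\end{equation*}
Crucially $\widetilde L$ is a function of $n+1$ replicas with $\|\widetilde L\|_\infty \leq 2n\|L\|_\infty$, so the assignment $L\mapsto \widetilde L$ preserves the structural hypotheses of the lemma (bounded observable on finitely many replicas).

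Iterating this procedure $k$ times produces an observable $L_k$ on $n+k$ replicas satisfying
\begin{equation*}
\frac{d^k}{dx^k}\la L\ra_x \;=\; \gamma^k\,\la L_k\ra_x, \qquad \|L_k\|_\infty \;\leq\; \|L\|_\infty\prod_{j=0}^{k-1}2(n+j),
\end{equation*}
which yields $\bigl|\frac{d^k}{dx^k}\la L\ra_x\bigr|\leq \gamma^k\prod_{j=0}^{k-1}2(n+j)$. It only remains to reconcile this product with the stated $C_{k,n}=2^{k(k+1)/2}n^k$: using the pointwise estimate $n+j\leq 2^j n$ (valid for $n\geq 1$ and $j\geq 0$, by induction on $j$) together with $\sum_{j=0}^{k-1}j=k(k-1)/2$, we obtain $\prod_{j=0}^{k-1}2(n+j)\leq 2^k\cdot 2^{k(k-1)/2}n^k = 2^{k(k+1)/2}n^k$, as required.

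The proof has no deep obstacle; the only point requiring care is the replica bookkeeping in the inductive step, namely checking that the operation $L\mapsto\widetilde L$ produces a bounded observable on exactly one additional replica with a controlled $2n$-factor blow-up in the sup-norm. Everything else is a brief algebraic simplification in which a sharp but awkward product $\prod_{j=0}^{k-1}2(n+j)$ is weakened to the clean constant $C_{k,n}$.
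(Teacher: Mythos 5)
Your proof is correct, and it follows a genuinely leaner replica bookkeeping than the paper's. The paper's claim \eqref{eq3} expands the $k$-th derivative by \emph{doubling} the replica count at each differentiation: starting from $n$ replicas, one derivative passes to $2n$, the next to $4n$, and so on up to $2^k n$, with the number of summands in the $j$-th layer equal to $2^j n$, which directly yields the product $\prod_{i=1}^{k}(2^i n) = 2^{k(k+1)/2}n^k = C_{k,n}$. You instead reuse a \emph{single} fresh replica at each step, noting that the subtracted term $\la L\ra_x \la f\ra_x$ needs only one new coordinate $\sigma^{n+1}$ regardless of the index $i$ in the sum. This turns the inductive step into the clean map $L\mapsto\widetilde L$ with $\|\widetilde L\|_\infty\leq 2n\|L\|_\infty$ on $n+1$ replicas, and produces the sharper intermediate bound $\gamma^k\prod_{j=0}^{k-1}2(n+j) = \gamma^k\,2^k\,n(n+1)\cdots(n+k-1)$, which you then coarsen to $C_{k,n}$ via $n+j\leq 2^j n$. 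Both routes use the same core idea (differentiating the Gibbs weight, $Z'=\gamma Z\la f\ra_x$, and expressing decoupled averages via independent replicas), but your one-replica-at-a-time scheme is tighter and makes the structural hypothesis ``bounded observable on finitely many replicas'' visibly closed under differentiation, whereas the paper's replica-doubling gives a more rigid but slightly more wasteful combinatorial formula. The only worthwhile caveat is that the statement is given for a fixed $C_{k,n}$, so the final weakening step is necessary for a literal match; you handle it correctly.
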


\begin{proof}
We claim that for $k\geq 1,$
\begin{align}\label{eq3}
\frac{d^k}{dx^k}\la L\ra_x&=\gamma^k\sum_{\ell_1=1}^{2n}\sum_{\ell_2=1}^{2^2n}\cdots\sum_{\ell_k=1}^{2^kn}c_{\ell_1,\ell_2,\ldots,\ell_k}\la Lf(\sigma^{\ell_1})f(\sigma^{\ell_2})\cdots f(\sigma^{\ell_k})\ra_x,
\end{align}
for some constants $|c_{\ell_{1},\ell_2,\ldots,\ell_k}|=1.$ If this holds, then clearly,
\begin{align*}
\Bigl|\frac{d^k}{dx^k}\la L\ra_x\Bigr|&\leq \gamma^k\prod_{i=1}^{k}(2^in)=\gamma^k 2^{k(k+1)/2}n^k=\gamma^kC_{k,n}.
\end{align*}
To show \eqref{eq3}, we proceed by induction. If $k=1,$ then
\begin{align*}
\frac{d}{dx}\la L\ra_x&=\gamma \sum_{\ell=1}^{n}\la L(f(\sigma^\ell)-f(\sigma^{\ell+n}))\ra_x
=\gamma \sum_{\ell_1=1}^{2n}c_{\ell_1}\la Lf(\sigma^\ell)\ra_x,
\end{align*}
where $c_{\ell_1}=1$ for $1\leq \ell_1\leq n$ and $-1$ for $n+1\leq \ell_1\leq 2n.$ Assume that \eqref{eq3} holds for some $k\geq 1.$ For any $Lf(\sigma^{\ell_1})\cdots f(\sigma^{\ell_k})$, we shall regard it as a function depending on $(\si^{\ell})_{1\leq \ell\leq 2^kn}$ and we compute directly to get
\begin{align*}
\frac{d}{dx}\la Lf(\sigma^{\ell_1})\cdots f(\sigma^{\ell_k})\ra_x
&=\gamma\sum_{\ell=1}^{2^kn}\la Lf(\sigma^{\ell_1})\cdots f(\sigma^{\ell_k})(f(\sigma^{\ell})-f(\sigma^{2^kn+\ell}))\ra_x\\
&=\gamma\sum_{\ell=1}^{2^{k+1}n}d_{\ell_1,\ldots,\ell_k,\ell_{k+1}}\la Lf(\sigma^{\ell_1})\cdots f(\sigma^{\ell_k})f(\sigma^{\ell})\ra_x,
\end{align*}
where $d_{\ell_1,\ldots,\ell_{k},\ell_{k+1}}=1$ if $1\leq\ell_{k+1}\leq 2^kn$ and $=-1$ if $2^kn\leq \ell_{k+1}\leq 2^{k+1}n.$ It follows that
\begin{align*}
\frac{d^{k+1}}{dx^{k+1}}\la L\ra_x
&=\gamma^{k+1}\sum_{\ell_1=1}^{2n}\cdots\sum_{\ell_k=1}^{2^kn}c_{\ell_1,\ell_2,\ldots,\ell_k}\sum_{\ell=1}^{2^{k+1}n}d_{\ell_1,\ldots,\ell_k,\ell_{k+1}}\la Lf(\sigma^{\ell_1})\cdots f(\sigma^{\ell_k})f(\sigma^{\ell})\ra_x\\
&=\gamma^{k+1}\sum_{\ell_1=1}^{2n}\cdots\sum_{\ell_k=1}^{2^kn}\sum_{\ell=1}^{2^{k+1}n}c_{\ell_1,\ell_2,\ldots,\ell_k}d_{\ell_1,\ldots,\ell_k,\ell_{k+1}}\la Lf(\sigma^{\ell_1})\cdots f(\sigma^{\ell_k})f(\sigma^{\ell})\ra_x.
\end{align*}
Letting $c_{\ell_1,\ell_2,\ldots,\ell_{k},\ell_{k+1}}=c_{\ell_1,\ell_2,\ldots,\ell_k}d_{\ell_1,\ldots,\ell_k,\ell_{k+1}}$ implies \eqref{eq3} in the case of $k+1.$ This completes the proof of our claim.
\end{proof}

\begin{proof}[Proof of Theorem \ref{thm2}] Consider the interpolated Hamiltonian between $H_g$ and $H_y,$
$$
H_s(\sigma)=\sum_{e\in E}\gamma_e(\sqrt{s}g_e+\sqrt{1-s}y_e)f_e(\sigma).
$$
Let $\la \cdot\ra_s$ be the corresponding Gibbs average and set $\phi(s)=\e \la L\ra_s.$  A direct computation gives that
\begin{align*}
\phi'(t)
&=\sum_{e\in E}\sum_{\ell=1}^n\gamma_e\e \Bigl\la L_{e,\ell}\Bigl(\frac{g_e}{\sqrt{s}}-\frac{y_e}{\sqrt{1-s}}\Bigr)\Bigr\ra_s,
\end{align*}
where $L_{e,\ell}=2^{-1}L(f_e(\sigma^\ell)-f_e(\sigma^{\ell+n})).$ Note that $L_{e,\ell}$ is a function of $(\si^\ell)_{1\leq \ell\leq n}$ with $|L_{e,\ell}|\leq 1.$ We shall think of $L_{e,\ell}$ as a function depending on $(\si^\ell)_{1\leq \ell\leq 2n}.$ For each $e\in E$, using the Gaussian integration by parts,
\begin{align*}
\e \la L_{e,\ell}g_e\ra_s&=\sqrt{s}\gamma_e \sum_{\ell=1}^{2n}\e\la L_{e,\ell}(f_e(\sigma^\ell)-f_e(\si^{\ell+2n}))\ra_s
\end{align*}
and from the approximate integration by parts \eqref{aip:eq1} together with Lemma \ref{lem1}, we obtain
\begin{align*}
&\Bigl|\e \la L_{e,\ell}y_e\ra_s-\sqrt{1-s}\gamma_e \sum_{\ell=1}^{2n}\e\la L_{e,\ell}(f_e(\sigma^\ell)-f_e(\si^{\ell+n}))\ra_s\Bigr|\\
&\leq \frac{4\gamma_e^{k-1}(1-s)^{(k-1)/2}}{(k-2)!}C_{k-1,2{n}}\e [|y_e|^{k};|y_e|\geq K_e]\\
&\quad+\frac{(k+1)\gamma_e^k(1-s)^{k/2}}{k!}C_{k,2n}K_e\e|y_e|^{k}.
\end{align*}
Combining these together and noting that $0<s<1$, the triangle inequality yields
\begin{align}
\begin{split}\label{eq9}
\Bigl|\e \Bigl\la L_{e,\ell}\Bigl(\frac{g_e}{\sqrt{s}}-\frac{y_e}{\sqrt{1-s}}\Bigr)\Bigr\ra_s\Bigr|
&=\Bigl|\frac{1}{\sqrt{s}}\e \la L_{e,\ell}g_e\ra_s-\frac{1}{\sqrt{1-s}}\e \la L_{e,\ell}y_e\ra_s\Bigr|
\leq I_e(K_e),
\end{split}
\end{align}
where
\begin{align*}
I_e(K_e)&:=\frac{4\gamma_e^{k-1}}{(k-2)!}C_{k-1,2{n}}\e [|y_e|^{k};|y_e|\geq K_e]+\frac{(k+1)\gamma_e^k}{k!}C_{k,2n}K_e\e|y_e|^{k}.
\end{align*}
Consequently, 
\begin{align*}
|\phi'(s)|&\leq n\sum_{e\in E}\gamma_eI_e(K_e)=I((K_e)_{e\in E}),
\end{align*}
which gives \eqref{thm2:eq1}. To show \eqref{thm2:eq2}, we use \eqref{aip:eq2} and follow the same argument as above.
\end{proof}

Let us now proceed to see how universality holds in the mixed $p$-spin model using Theorem \ref{thm2}. Recall the definition of the mixed $p$-spin model from Example 1. Set $E_p=\{1,\ldots,N\}^p$ for all $p\geq 2$ and $E=\cup_{p\geq 2}E_p$. Let $\gamma_e=\beta_pN^{-(p-1)/2}$ and $f_e(\sigma)=\sigma_{i_1}\cdots\sigma_{i_p}$ if $e=(i_1,\ldots,i_p)\in E_p$. The Hamiltonian of the mixed $p$-spin model in Example 1 can be rewritten as
\begin{align*}
X(\sigma)&=H_y(\si)+h\sum_{i=1}^N\sigma_i.
\end{align*}
For simplicity, we assume that $(y_{e})_{e\in E}$ are identically distributed with common law $y$. 

\begin{theorem}\label{thm6} The following two statements hold.
\begin{enumerate}
\item Suppose that $\beta_2=\beta_3=0$ and $y$ has mean zero and variance one with $\e|y|^3<\infty$. Then we have that
\begin{align}\label{thm6:eq2}
|\e \la L\ra_g-\e\la L\ra_y|&\leq \frac{3nC_{2,2n}\e|y|^{3}}{2N^{1/2}}\sum_{p\geq 4}\beta_p^{3}.
\end{align}
\item Suppose that the first four moments of $y$ match the first four moments of a standard Gaussian random variable. Then
\begin{align}
\begin{split}
\label{thm6:eq1}
|\e \la L\ra_g-\e\la L\ra_y|&\leq 2nC_{3,2n}\e[|y|^4;|y|\geq  N^{1/4}]\sum_{p\geq 2}\beta_{p}^{4}+\frac{5nC_{4,2n}\e|y|^4}{24N^{1/4}}\sum_{p\geq 2}\beta_{p}^{5}.
\end{split}
\end{align}
\end{enumerate}
\end{theorem}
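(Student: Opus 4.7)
The plan is to apply Theorem \ref{thm2} directly to the mixed $p$-spin model. Write $X(\sigma)=H_y(\sigma)+h\sum_{i=1}^N\sigma_i$ with $H_y(\sigma)=\sum_{e\in E}\gamma_e y_e f_e(\sigma)$, where $E=\bigcup_{p\geq 2}E_p$, $\gamma_e=\beta_p N^{-(p-1)/2}$, and $f_e(\sigma)=\sigma_{i_1}\cdots\sigma_{i_p}$ for $e=(i_1,\ldots,i_p)\in E_p$. The deterministic external field can be absorbed into the reference measure $\mu$ as in Example~1, so the Gibbs measure of $X$ fits the framework of Theorem~\ref{thm2}, and the problem reduces to bounding the relevant $\gamma_e$-sums for $k=2$ (part 1) and $k=4$ (part 2).

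For part (1), the hypotheses mean only two moments of $y$ are matched, so I apply \eqref{thm2:eq2} with $k=2$ to obtain
$$|\e\la L\ra_g-\e\la L\ra_y|\leq \frac{3nC_{2,2n}\,\e|y|^3}{2}\sum_{e\in E}\gamma_e^3.$$
Since $|E_p|=N^p$, I compute $\sum_{e\in E_p}\gamma_e^3=\beta_p^3 N^{(3-p)/2}$. This sum is $O(N^{-1/2})$ precisely when $p\geq 4$; for $p\in\{2,3\}$ it is $N^{1/2}$ or $1$, which is why the hypothesis $\beta_2=\beta_3=0$ is imposed. For every $p\geq 4$ I use $N^{(3-p)/2}\leq N^{-1/2}$ to recover \eqref{thm6:eq2}.

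For part (2), four moments of $y$ are matched but no finite fifth moment is assumed, so \eqref{thm2:eq2} is unavailable and I must apply \eqref{thm2:eq1} with $k=4$, choosing the truncation levels $K_e$ myself. I take $K_e=N^{1/4}$ uniformly. The first term of \eqref{thm2:eq1} then reads $2nC_{3,2n}\sum_{e\in E}\gamma_e^4\,\e[|y|^4;|y|\geq N^{1/4}]$, where $\sum_{e\in E_p}\gamma_e^4=\beta_p^4 N^{2-p}\leq \beta_p^4$ for $p\geq 2$. The second term reads $\tfrac{5nC_{4,2n}\,\e|y|^4}{24}\,N^{1/4}\sum_{e\in E}\gamma_e^5$, where $\sum_{e\in E_p}\gamma_e^5=\beta_p^5 N^{(5-3p)/2}\leq \beta_p^5 N^{-1/2}$ for $p\geq 2$, producing a net $N^{-1/4}$ factor. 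Summing over $p$ gives \eqref{thm6:eq1}.

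The only non-mechanical step is the calibration $K_e=N^{1/4}$ in part (2): this scaling balances the truncated fourth-moment tail against the $K_e$-linear remainder, exploiting that the fifth $\gamma_e$-sum already contains an extra $N^{-1/2}$ compared to the fourth. Once this choice is fixed, everything else is bookkeeping with the exponents of $N$ that come from $|E_p|=N^p$ against $\gamma_e^{k}=\beta_p^{k}N^{-k(p-1)/2}$, so I do not anticipate a substantial obstacle.
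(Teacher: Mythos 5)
Your proposal is correct and follows essentially the same argument as the paper: apply \eqref{thm2:eq2} with $k=2$ for part (1) and \eqref{thm2:eq1} with $k=4$ for part (2), then bound the $\gamma_e$-sums using $|E_p|=N^p$ and $\gamma_e=\beta_p N^{-(p-1)/2}$. The only deviation is your uniform truncation $K_e=N^{1/4}$ where the paper takes $K_e=N^{(3p-5)/4}$; since the paper immediately bounds the resulting tail term by the worst case $p=2$ anyway, the two choices yield identical final bounds, and yours is the slightly simpler calibration.
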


\begin{proof} We start by showing the first statement. Since $\e |y|^{3}<\infty,$ using \eqref{thm2:eq2} with $k=2$ gives
\begin{align*}
|\e \la L\ra_g-\e\la L\ra_y|&\leq \frac{3nC_{2,2n}\e|y|^{3}}{2}\sum_{p\geq 4}\sum_{e\in E_p}\gamma_e^{3}\\
&=\frac{3nC_{2,2n}\e|y|^{3}}{2}\sum_{p\geq 4}\frac{\beta_p^{3}}{N^{3(p-1)/2-p}}\\
&\leq \frac{3nC_{2,2n}\e|y|^{3}}{2N^{1/2}}\sum_{p\geq 4}\beta_p^{3}.
\end{align*}
As for the second statement, we take 
\begin{align}\label{eq2}
K_e=N^{(3p-5)/4}\geq N^{1/4}
\end{align}
for all $e\in E_p$ and $p\geq 2$ and applying \eqref{thm2:eq1} with these choices of $(K_e)$, we obtain
\begin{align*}
|\e \la L\ra_g-\e\la L\ra_y|
&\leq 2nC_{3,2n}\sum_{p\geq 2}\sum_{e\in E_p}\gamma_e^4\e[|y_e|^4;|y_e|\geq K_e]
+\frac{5nC_{4,2n}}{24}\sum_{p\geq 2}\sum_{e\in E_p}\gamma_e^{5}K_e\e|y_e|^4,
\end{align*}
where from \eqref{eq2},
\begin{align*}
\sum_{p\geq 2}\sum_{e\in E_p}\gamma_e^4\e[|y_e|^4:|y_e|\geq K_e]
&=\sum_{p\geq 2}\frac{\beta_{p}^4}{N^{p-2}}\e[|y|^4;|y|\geq N^{(3p-5)/4}]\\
&\leq\e[|y|^4;|y|\geq  N^{1/4}] \sum_{p\geq p_0}\beta_{p}^{4}
\end{align*}
and
\begin{align*}
\sum_{e\in E_p}\gamma_e^{5}K_e\e|y_e|^4&=\e|y|^4\sum_{p\geq 2}\frac{\beta_{p}^{5}}{N^{3p/4-5/4}}\leq \frac{\e|y|^4}{N^{1/4}}\sum_{p\geq 2}\beta_{p}^{5}.
\end{align*}
The last two inequalities combined give us \eqref{thm6:eq1}.
\end{proof}

\begin{theorem}\label{thm:GG} Suppose that the assumption in either the first or the second statement of Theorem \ref{thm6} holds.
If ultrametricity holds in the mixed $p$-spin model with the Gaussian disorder, that is, if
\begin{align}\label{ultra}
\lim_{N\rightarrow\infty}\e\bigl\la I(R_{1,3}+\varepsilon\leq \min(R_{1,2},R_{1,3}))\bigr\ra_g=0,\,\,\forall \varepsilon>0,
\end{align} then it also holds for the disorder $y,$ where $I$ is an indicator function and $R_{\ell,\ell'}:=N^{-1}\sum_{i=1}^N\si_i^\ell\si_i^{\ell'}$ is the overlap between $\si^\ell$ and $\si^{\ell'}.$ 
\end{theorem}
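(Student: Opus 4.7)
The plan is to apply Theorem~\ref{thm6} to the bounded functional
\[
L \;=\; I\bigl(R_{1,3} + \varepsilon \leq \min(R_{1,2}, R_{1,3})\bigr)
\]
of the three replicas $\sigma^1,\sigma^2,\sigma^3$, viewed as a function on $\Sigma_N^3$ with $\|L\|_\infty \leq 1$. Before doing so, I would stress that the fact that $L$ is only an indicator (hence nonsmooth on $\Sigma_N^3$) poses no issue: in the proof of Theorem~\ref{thm2}, all differentiation is performed with respect to the disorder variables, and Lemma~\ref{lem1} expresses the $k$th derivative of $\la L\ra_x$ as a Gibbs average of $L$ times products $f_e(\sigma^{\ell_1})\cdots f_e(\sigma^{\ell_k})$. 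The observable $L$ thus enters only through its $L^\infty$ bound, so Theorem~\ref{thm2} applies to any bounded measurable $L$, here with $n=3$.

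Under the hypotheses of case~1 of Theorem~\ref{thm6} (namely $\beta_2=\beta_3=0$ and $\e|y|^3<\infty$), the bound \eqref{thm6:eq2} gives
\[
\bigl|\e\la L\ra_g - \e\la L\ra_y\bigr| \;\leq\; \frac{C}{N^{1/2}}
\]
for a constant $C$ depending only on $n=3$ and on $(\beta_p)_{p\geq 4}$, which vanishes as $N\to\infty$. Under the hypotheses of case~2 (the first four moments of $y$ matching those of a standard Gaussian), the bound \eqref{thm6:eq1} yields two contributions, one of order $\e[|y|^4;|y|\geq N^{1/4}]$ and one of order $N^{-1/4}$. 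Since $\e|y|^4<\infty$, the first vanishes by dominated convergence and the second is explicit, so once again $|\e\la L\ra_g - \e\la L\ra_y|\to 0$ as $N\to\infty$.

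Combining this vanishing universality error with the standing hypothesis \eqref{ultra}, which asserts $\e\la L\ra_g\to 0$ under Gaussian disorder, immediately yields $\e\la L\ra_y\to 0$, which is exactly ultrametricity for the disorder $y$.

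The main (and essentially only) obstacle is conceptual: one must be comfortable applying Theorem~\ref{thm2} to the nonsmooth indicator $L$. As indicated above, this is automatic because the smoothness requirement in the approximate integration by parts lemma is a requirement on the map $x \mapsto \la L\ra_x$ of a single disorder coordinate, which is automatically $C^\infty$ thanks to the exponential form of the Gibbs weight; no regularity of $L$ itself is ever invoked. Consequently, no approximation of $L$ by smooth functions, or any other technical workaround, is required.
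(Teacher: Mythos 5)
Your proposal is correct and follows the paper's argument exactly: take $L=I(R_{1,3}+\varepsilon\leq\min(R_{1,2},R_{1,3}))$ with $n=3$, invoke \eqref{thm6:eq2} or \eqref{thm6:eq1}, and combine the vanishing error with \eqref{ultra}. Your added remark that the smoothness in Lemma~\ref{lem2} is needed only in the disorder variable, never in $L$, is a correct and helpful clarification of a point the paper leaves implicit.
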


\begin{proof}
Let $$L(\si^1,\si^2,\si^3)=I(R_{1,3}+\varepsilon\leq \min(R_{1,2},R_{1,3})).$$ If the assumption of the first statement of Theorem \ref{thm6} holds, then we apply \eqref{thm6:eq2} and \eqref{ultra} to get ultrametricity \eqref{ultra} under the disorder $y.$ If now the assumption of the second statement of Theorem \ref{thm6} is valid, we use \eqref{thm6:eq1} combined with \eqref{ultra} to yield \eqref{ultra} under the disorder $y.$
\end{proof}

\begin{remark}
\rm  In \cite{Panchenko}, Panchenko showed that the Ghirlanda-Guerra identities yield ultrametricity \eqref{ultra}. These identities are known to hold, for example, in the generic mixed $p$-spin model with Gaussian disorder, that is, the linear span of $\{1\}\cup\{x^p:\beta_p\neq 0\,\,\mbox{for some $p\geq 2$}\}$ is dense in $C[-1,1]$ under the supremum norm. In this case, the assumptions in Theorem \ref{thm:GG} imply universality in ultrametricity. 
\end{remark}

\subsection{Chaos phenomena}

We now proceed to establish the universality for the coupled Gibbs measure. Recall the Hamiltonians $H_{1,y}$ and $H_{2,y}$ in \eqref{eq5}. As above, we assume that the first $k$ moments of the random variables $(y_e),$ $(y_{1,e})$ and $(y_{2,e})$ match those of a standard Gaussian random variable. In addition, the parameters $t,\gamma_1,\gamma_2$ in $H_{1,y},H_{2,y}$ are allowed to depend on $e\in E$. Consider the following generalized Hamiltonians,
\begin{align}
\begin{split}\label{eq11}
H_{1,y}(\rho)&=\sum_{e\in E}\gamma_{1,e}f_e(\rho)(\sqrt{t_e}y_e+\sqrt{1-t_e}y_{1,e}),\\
H_{2,y}(\tau)&=\sum_{e\in E}\gamma_{2,e}f_e(\tau)(\sqrt{t_e}y_e+\sqrt{1-t_e}y_{2,e}),
\end{split}
\end{align} 
where $(t_e)\subset [0,1]$, $\sum\gamma_{1,e}^2$ and $\sum\gamma_{2,e}^2$ are finite. We again use the notation $G_{1,y},G_{2,y}$ to denote the Gibbs measures associated to these two Hamiltonians and $\la\cdot\ra_y$ is the Gibbs expectation for $\prod_{\ell=1}^\infty (G_{1,y}\times G_{2,y})$. In the case that $(y_e)$, $(y_{1,e})$ and $(y_{2,e})$ are i.i.d. standard Gaussian random variables $(g_e)$, $(g_{1,e})$, $(g_{2,e})$, we shall use the notations $H_{1,g},H_{2,g}$ and $\la \cdot\ra_g.$

\begin{theorem}\label{thm5}
Let $L$ be a function depending on $(\rho^\ell,\tau^\ell)_{\ell\leq n}$ with $\|L\|_\infty\leq 1.$ For any $(K_{1,e})_{e\in E}$ and $(K_{2,e})_{e\in E}$ with $K_{1,e},K_{2,e}\geq 1$, we have
\begin{align}\label{thm5:eq1}
|\e\la L\ra_g-\e \la L\ra_y|\leq I((K_{1,e})_{e\in E},(K_{2,e})_{e\in E}),
\end{align}
where
\begin{align*}
I((K_{1,e})_{e\in E},(K_{2,e})_{e\in E})&:=\frac{8nC_{{k-1},2n}}{(k-2)!}\sum_{e\in E}\sum_{j=1,2}(\gamma_{1,e}+\gamma_{2,e})^{k}\e[|y_{j,e}|^k;|y_{j,e}|\geq K_{j,e}]\\
&\quad+\frac{2(k+1)nC_{k,2n}}{k!}\sum_{e\in E}\sum_{j=1,2}(\gamma_{1,e}+\gamma_{2,e})^{k+1}K_{j,e}\e|y_{j,e}|^k.
\end{align*}
Let $B_{k+1}$ be the supremum of $\e|y_e|^{k+1}$, $\e|y_{1,e}|^{k+1}$ and $\e|y_{2,e}|^{k+1}$ for all $e\in E$.
If $B_{k+1}$ is finite, then
\begin{align}\label{thm5:eq2}
|\e\la L\ra_g-\e \la L\ra_y|&\leq \frac{2(k+1)nC_{k,2n}B_{k+1}}{k!}\sum_{e\in E}(\gamma_{1,e}+\gamma_{2,e})^{k+1}.
\end{align}
\end{theorem}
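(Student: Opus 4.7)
The plan is to adapt the Guerra-style interpolation argument of Theorem \ref{thm2} to the coupled setting by viewing the pair of Gibbs measures $G_{1,y}\times G_{2,y}$ as a single Gibbs measure on the product space $\tilde\Sigma := \Sigma \times \Sigma$ with Hamiltonian $\tilde H(\tilde\sigma) := H_{1,y}(\rho)+H_{2,y}(\tau)$, where $\tilde\sigma=(\rho,\tau)$. Grouped by disorder source, $\tilde H$ splits into three independent families: the shared variables $(y_e)$ coupling to $\tilde f_{0,e}(\tilde\sigma):=\sqrt{t_e}\bigl[\gamma_{1,e}f_e(\rho)+\gamma_{2,e}f_e(\tau)\bigr]$, and the individual $(y_{1,e})$ and $(y_{2,e})$ coupling respectively to $\gamma_{1,e}\sqrt{1-t_e}f_e(\rho)$ and $\gamma_{2,e}\sqrt{1-t_e}f_e(\tau)$. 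In each case, the effective amplitude $\tilde\gamma_{\cdot,e}$ on $\tilde\Sigma$ is bounded by $\gamma_{1,e}+\gamma_{2,e}$.

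I would then interpolate by simultaneously replacing each $y_\cdot$ with $\sqrt{s}\,g_\cdot+\sqrt{1-s}\,y_\cdot$ (for independent auxiliary Gaussians), so that $\phi(s):=\e\la L\ra_s$ satisfies $\phi(1)=\e\la L\ra_g$ and $\phi(0)=\e\la L\ra_y$. Differentiating $\phi$ produces one contribution per family; for each, mimicking the proof of Theorem \ref{thm2}, I introduce the analogue of $L_{e,\ell}$, built from $L$ and the replica difference of the normalized coupling function, viewed as a function on $2n$ replicas of $\tilde\sigma$. Applying Gaussian integration by parts to the $g$-term and the approximate integration by parts \eqref{aip:eq1} to the $y$-term, the leading contributions match and cancel, leaving an error controlled by the right-hand side of \eqref{aip:eq1}. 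The norms $\|F^{(k-1)}\|_\infty$ and $\|F^{(k)}\|_\infty$ needed there are supplied by Lemma \ref{lem1} applied on $\tilde\Sigma^{2n}$ with amplitude $\tilde\gamma_{\cdot,e}$, giving the factors $\tilde\gamma_{\cdot,e}^{k-1}C_{k-1,2n}$ and $\tilde\gamma_{\cdot,e}^{k}C_{k,2n}$.

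Summing over $e$ and $\ell$, uniformly bounding $\tilde\gamma_{\cdot,e}\leq \gamma_{1,e}+\gamma_{2,e}$, and integrating $s\in[0,1]$, the three families combine into the claimed bound \eqref{thm5:eq1}. The second inequality \eqref{thm5:eq2} follows from the same argument with \eqref{aip:eq2} replacing \eqref{aip:eq1}, which collapses the error per family to a single $B_{k+1}$-term.

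The main delicate point will be the shared family $(y_e)$: because $\tilde f_{0,e}$ simultaneously involves both $\rho$- and $\tau$-replicas, the replica-doubling inside Lemma \ref{lem1} must be carried out on $\tilde\Sigma$ rather than on $\Sigma$. This is precisely why the constants $C_{k-1,2n}$ and $C_{k,2n}$ appear (with $2n$ counting $\tilde\sigma$-replicas after doubling), rather than larger constants that would arise from doubling $\rho$- and $\tau$-replicas independently. Once this product-space viewpoint is correctly set up, everything else is a careful bookkeeping exercise strictly parallel to the proof of Theorem \ref{thm2}.
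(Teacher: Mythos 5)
Your proposal is correct and follows essentially the same strategy as the paper's proof: Guerra interpolation $H_s = \sqrt{s}(H_{1,g}+H_{2,g}) + \sqrt{1-s}(H_{1,y}+H_{2,y})$, Gaussian integration by parts on the $g$-part versus approximate integration by parts (Lemma \ref{lem2}) on the $y$-part, with the derivative bounds supplied by Lemma \ref{lem1} applied on the product space $(\Sigma\times\Sigma)^{2n}$, and everywhere bounding $\sqrt{s},\sqrt{1-s},\sqrt{t_e}\leq 1$ and the effective amplitude by $\gamma_{1,e}+\gamma_{2,e}$. The only organizational difference is that you package the shared-disorder contribution as a single normalized coupling function $\tilde f_{0,e}(\tilde\sigma)=\sqrt{t_e}[\gamma_{1,e}f_e(\rho)+\gamma_{2,e}f_e(\tau)]$ with one replica-difference observable, whereas the paper splits it into two pieces $L_{1,e,\ell}$ and $L_{2,e,\ell}$ with prefactors $\gamma_{1,e}\sqrt{t_e}$ and $\gamma_{2,e}\sqrt{t_e}$; both bookkeepings collapse to the same $(\gamma_{1,e}+\gamma_{2,e})^k$ and $(\gamma_{1,e}+\gamma_{2,e})^{k+1}$ factors after summing. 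Your observation about the replica count $2n$ (doubling in $\tilde\sigma$, not separately in $\rho$ and $\tau$) is also the correct reading of the paper's constants.
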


\begin{proof}
Consider the interpolated Hamiltonian for $0\leq s\leq 1,$
\begin{align*}
H_s(\rho,\tau)&=\sqrt{s}(H_{1,g}(\rho)+H_{2,g}(\tau))+\sqrt{1-s}(H_{1,y}(\rho)+H_{2,y}(\tau)).
\end{align*}
Denote by $\la\cdot\ra_s$ its Gibbs expectation and set $\phi(s)=\e\la L\ra_s.$ Note that $\phi(0)=\e\la L\ra_y$ and $\phi(1)=\e\la L\ra_g.$ Computing directly gives
\begin{align*}
\phi'(s)&=\sum_{e\in E}\gamma_{1,e}\sqrt{t_e}\sum_{\ell\leq n}\e \Bigl\la L_{1,e,\ell}\Bigl(\frac{g_{e}}{\sqrt{s}}-\frac{y_{e}}{\sqrt{1-s}}\Bigr)\Bigl\ra_s\\
&+\sum_{e\in E}\gamma_{2,e}\sqrt{t_e}\sum_{\ell\leq n}\e \Bigl\la L_{2,e,\ell}\Bigl(\frac{g_{e}}{\sqrt{s}}-\frac{y_{e}}{\sqrt{1-s}}\Bigr)\Bigl\ra_s\\
&+\sum_{e\in E}\gamma_{1,e}\sqrt{1-t_e}\sum_{\ell\leq n}\e \Bigl\la L_{1,e,\ell}\Bigl(\frac{g_{1,e}}{\sqrt{s}}-\frac{y_{1,e}}{\sqrt{1-s}}\Bigr)\Bigl\ra_s\\
&+\sum_{e\in E}\gamma_{2,e}\sqrt{1-t_e}\sum_{\ell\leq n}\e \Bigl\la L_{2,e,\ell}\Bigl(\frac{g_{2,e}}{\sqrt{s}}-\frac{y_{2,e}}{\sqrt{1-s}}\Bigr)\Bigl\ra_s,
\end{align*}
where $L_{1,e,\ell}:=2^{-1}L(f_e(\rho^\ell)-f_e(\rho^{\ell+n}))$ and $L_{2,e,\ell}=2^{-1}L(f_e(\tau^{\ell})-f_e(\tau^{\ell+n})).$ Note that we shall regard $L_{1,e,\ell}$ and $L_{2,e,\ell}$ as functions of $(\rho^\ell,\tau^\ell)_{\ell\leq 2n}$ with $|L_{1,e,\ell}|\leq 1$ and $|L_{2,e,\ell}|\leq 1.$ Following the same derivation as \eqref{eq9}, each term in the first two summations can be controlled by
\begin{align*}
\Bigl|\e \Bigl\la L_{j,e,\ell}\Bigl(\frac{g_{e}}{\sqrt{s}}-\frac{y_{e}}{\sqrt{1-s}}\Bigr)\Bigl\ra_s\Bigr|&\leq I_{j,e}(K_{j,e}),
\end{align*}
while each term in the last two summations can be controlled through
\begin{align*}
\Bigl|\e \Bigl\la L_{j,e,\ell}\Bigl(\frac{g_{j,e}}{\sqrt{s}}-\frac{y_{j,e}}{\sqrt{1-s}}\Bigr)\Bigl\ra_s\Bigr|&\leq J_{j,e}(K_{j,e}),
\end{align*}
where for $j=1,2,$
\begin{align*}
I_{j,e}(K_{j,e})&=\frac{4t_e^{(k-1)/2}C_{{k-1},2n}}{(k-2)!}\sum_{e\in E}(\gamma_{1,e}+\gamma_{2,e})^{k-1}\e[|y_{j,e}|^k;|y_{j,e}|\geq K_{j,e}]\\
&\quad+\frac{(k+1)t_e^{k/2}C_{k,2n}}{k!}\sum_{e\in E}(\gamma_{1,e}+\gamma_{2,e})^{k}K_{j,e}\e|y_{j,e}|^k
\end{align*}
and
\begin{align*}
J_{j,e}(K_{j,e})&=\frac{4(1-t_e)^{(k-1)/2}C_{{k-1},2n}}{(k-2)!}\sum_{e\in E}\gamma_{j,e}^{k-1}\e[|y_{j,e}|^k;|y_{j,e}|\geq K_{j,e}]\\
&\quad+\frac{(k+1)(1-t_e)^{k/2}C_{k,2n}}{k!}\sum_{e\in E}\gamma_j^{k}K_{j,e}\e|y_{j,e}|^k.
\end{align*}
Aiding these inequalities together and noting that $0\leq t_e\leq 1$ imply that
\begin{align*}
|\phi'(s)|&\leq \frac{4nC_{{k-1},2n}}{(k-2)!}\sum_{e\in E}\sum_{j=1,2}(\gamma_{1,e}+\gamma_{2,e})^{k-1}\gamma_{j,e}\e[|y_{j,e}|^k;|y_{j,e}|\geq K_{j,e}]\\
&+\frac{4nC_{{k-1},2n}}{(k-2)!}\sum_{e\in E}\sum_{j=1,2}\gamma_{j,e}^k\e[|y_{j,e}|^k;|y_{j,e}|\geq K_{j,e}]\\
&+\frac{n(k+1)C_{k,2n}}{k!}\sum_{e\in E}\sum_{j=1,2}(\gamma_{1,e}+\gamma_{2,e})^{k}\gamma_{j,e}K_{j,e}\e|y_{j,e}|^k\\
&+\frac{n(k+1)C_{k,2n}}{k!}\sum_{e\in E}\sum_{j=1,2}\gamma_{j,e}^{k+1}K_{j,e}\e|y_{j,e}|^k\\
&\leq I((K_{e,1})_{e\in E},(K_{e,2})_{e\in E}).
\end{align*}
Therefore, \eqref{thm5:eq1} holds. As for \eqref{thm5:eq2}, we use \eqref{aip:eq2} and Lemma \ref{lem1} with a similar argument.


\end{proof}

We now explain how Theorem \ref{thm5} implies universality in chaos phenomena. Recall the two mixed $p$-spin Hamiltonians from Example 4. As in our discussion right before Theorem \ref{thm6}, we define $E=\cup_{p\geq 2}E_p$ with $E_p=\{1,\ldots,N\}^p.$ For any $e=(i_1,\ldots,i_p)\in E$, set $t_e=t_p$, $f_e(\si)=\sigma_{i_1,\ldots,i_p}$ and  
\begin{align*}
\gamma_{1,e}&=\beta_{1,p}N^{-(p-1)/2},\,\,\gamma_{2,e}=\beta_{2,p}N^{-(p-1)/2},\\
y_{e}&=y_{i_1,\ldots,i_p},\,\,y_{1,e}=y_{1,i_1,\ldots,i_p},\,\,y_{2,e}=y_{2,i_1,\ldots,i_p}.
\end{align*}
Then the two mixed $p$-spin Hamiltonians $X_1,X_2$ in Example 4 can be written as
\begin{align*}
X_1(\rho)&=H_{1,y}(\rho)+h_1\sum_{i=1}^N\rho_i,\\
X_2(\rho)&=H_{2,y}(\tau)+h_2\sum_{i=1}^N\tau_i,
\end{align*}
where $H_{y,1},H_{2,y}$ are defined in \eqref{eq11}. We shall again for simplicity consider only the situation that $(y_e)$, $(y_{1,e})$ and $(y_{2,e})$ are identically distributed with common law $y.$ Following identically the same arguments as Theorem \ref{thm6} using Theorem \ref{thm5}, we obtain an analogue of Theorem \ref{thm6} for the coupled system:

\begin{theorem}\label{thm7}
We have the following two statements.
\begin{enumerate}
\item Suppose that $\beta_{1,2}=\beta_{1,3}=\beta_{2,1}=\beta_{2,3}=0$ and $y$ has mean zero and variance one with $\e|y|^3<\infty$. Then we have that
\begin{align}\label{thm7:eq2}
|\e \la L\ra_g-\e\la L\ra_y|&\leq \frac{3nC_{2,2n}\e|y|^{3}}{N^{1/2}}\sum_{p\geq 4}(\beta_{1,p}+\beta_{2,p})^{3}.
\end{align}
\item Suppose that the first four moments of $y$ match the first four moments of a standard Gaussian random variable. Then
\begin{align}
\begin{split}
\label{thm7:eq1}
|\e \la L\ra_g-\e\la L\ra_y|&\leq 4nC_{3,2n}\e[|y|^4;|y|\geq  N^{1/4}]\sum_{p\geq 2}(\beta_{1,p}+\beta_{2,p})^{4}\\
&+\frac{5nC_{4,2n}\e|y|^4}{12N^{1/4}}\sum_{p\geq 2}(\beta_{1,p}+\beta_{2,p})^{5}.
\end{split}
\end{align}
\end{enumerate}
\end{theorem}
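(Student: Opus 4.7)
The plan is to deduce Theorem \ref{thm7} from Theorem \ref{thm5} in exactly the same way that Theorem \ref{thm6} was deduced from Theorem \ref{thm2}, with the only real change being that the single rate $\gamma_e$ is replaced by the combined rate $\gamma_{1,e}+\gamma_{2,e}$ that appears in the bounds of Theorem \ref{thm5}. Concretely, we identify $E=\cup_{p\ge 2}E_p$ with $E_p=\{1,\ldots,N\}^p$ and, for $e=(i_1,\ldots,i_p)\in E_p$, set $t_e=t_p$, $f_e(\sigma)=\sigma_{i_1}\cdots\sigma_{i_p}$, $\gamma_{1,e}=\beta_{1,p}N^{-(p-1)/2}$, $\gamma_{2,e}=\beta_{2,p}N^{-(p-1)/2}$, and $y_e,y_{1,e},y_{2,e}$ as in the paragraph preceding the theorem. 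With this identification the two Hamiltonians $X_1,X_2$ coincide (modulo the deterministic external field, which is absorbed into the base measure) with the generalized Hamiltonians in \eqref{eq11}, so Theorem \ref{thm5} applies directly.

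For statement (1), the hypothesis matches two moments and provides $B_3=\e|y|^3<\infty$, so I would invoke \eqref{thm5:eq2} with $k=2$. This produces a bound proportional to $\sum_{e\in E}(\gamma_{1,e}+\gamma_{2,e})^3$. Since $\beta_{1,2}=\beta_{1,3}=\beta_{2,2}=\beta_{2,3}=0$, the sum ranges only over $p\ge 4$, and each $p$-slice contributes $|E_p|\cdot N^{-3(p-1)/2}(\beta_{1,p}+\beta_{2,p})^3 = N^{-(p-3)/2}(\beta_{1,p}+\beta_{2,p})^3 \leq N^{-1/2}(\beta_{1,p}+\beta_{2,p})^3$. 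Tracking the overall constant from the $\frac{2(k+1)nC_{k,2n}}{k!}$ prefactor in \eqref{thm5:eq2} gives \eqref{thm7:eq2}.

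For statement (2), the first four moments match, so I would apply \eqref{thm5:eq1} with $k=4$ and the choice $K_{1,e}=K_{2,e}=N^{(3p-5)/4}$ for $e\in E_p$, exactly the threshold used in the proof of Theorem \ref{thm6}. Since $y_{1,e},y_{2,e}$ share the common law $y$ and the thresholds agree, the $\sum_{j=1,2}$ in Theorem \ref{thm5} contributes a factor $2$. The first piece of $I((K_{1,e}),(K_{2,e}))$ becomes $\sum_{p\ge 2}N^{-(p-2)}(\beta_{1,p}+\beta_{2,p})^4\e[|y|^4;|y|\ge N^{(3p-5)/4}]$, which I would bound by $\e[|y|^4;|y|\ge N^{1/4}]\sum_{p\ge 2}(\beta_{1,p}+\beta_{2,p})^4$ using $N^{-(p-2)}\le 1$ and $N^{(3p-5)/4}\ge N^{1/4}$ for $p\ge 2$. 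The second piece simplifies analogously to $N^{-1/4}\e|y|^4\sum_{p\ge 2}(\beta_{1,p}+\beta_{2,p})^5$, and assembling with the constants $\frac{8nC_{3,2n}}{2!}$ and $\frac{10nC_{4,2n}}{4!}=\frac{5nC_{4,2n}}{12}$ yields \eqref{thm7:eq1}.

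There is no conceptual obstacle; the argument is a verbatim transcription of Theorem \ref{thm6}'s proof with $\gamma_e\rightsquigarrow\gamma_{1,e}+\gamma_{2,e}$ and an extra factor of two absorbed from the sum over $j=1,2$. The only mild subtlety is the constant bookkeeping to make sure the factorials, the factor $2$ from \eqref{thm5:eq1} versus \eqref{thm2:eq1}, and the doubling from summing the two disorder contributions are combined correctly so that the resulting coefficients match those written in \eqref{thm7:eq2} and \eqref{thm7:eq1}.
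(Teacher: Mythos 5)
Your proposal takes exactly the route the paper takes; the paper's entire proof of Theorem \ref{thm7} is the single sentence ``Following identically the same arguments as Theorem \ref{thm6} using Theorem \ref{thm5},'' and your write-up is a faithful expansion of that, with the substitution $\gamma_e\rightsquigarrow\gamma_{1,e}+\gamma_{2,e}$ and the threshold $K_{j,e}=N^{(3p-5)/4}$ as in Theorem \ref{thm6}.

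One bookkeeping remark worth noting. In statement (2) you observe that the $\sum_{j=1,2}$ contributes a factor $2$, but then report the constants $\frac{8nC_{3,2n}}{2!}=4nC_{3,2n}$ and $\frac{2\cdot 5\,nC_{4,2n}}{4!}=\frac{5nC_{4,2n}}{12}$, which do \emph{not} carry that extra $2$. Plugging $K_{1,e}=K_{2,e}$ and a common law $y$ literally into the displayed $I((K_{1,e})_{e\in E},(K_{2,e})_{e\in E})$ of Theorem \ref{thm5} gives $8nC_{3,2n}$ and $\frac{5nC_{4,2n}}{6}$, i.e.\ twice the constants in \eqref{thm7:eq1}. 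The resolution is that the bound $I((K_{1,e})_{e\in E},(K_{2,e})_{e\in E})$ as displayed in Theorem \ref{thm5} is itself a factor of $2$ loose relative to what the proof actually produces in the symmetric setting: the first two lines of the $|\phi'(s)|$ estimate combine to $\frac{4nC_{k-1,2n}}{(k-2)!}\sum_e\bigl[(\gamma_{1,e}+\gamma_{2,e})^k+\gamma_{1,e}^k+\gamma_{2,e}^k\bigr]\e[\cdots]\le\frac{8nC_{k-1,2n}}{(k-2)!}\sum_e(\gamma_{1,e}+\gamma_{2,e})^k\e[\cdots]$ (using $\sum_j\gamma_{j,e}=\gamma_{1,e}+\gamma_{2,e}$, not $2(\gamma_{1,e}+\gamma_{2,e})$), with no residual $\sum_j$. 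So the constants in \eqref{thm7:eq1} are justified by the \emph{proof} of Theorem \ref{thm5} rather than by its stated bound. This small inconsistency is present in the paper as well; your argument inherits it but lands on the intended constants.

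Statement (1) is clean: \eqref{thm5:eq2} has no $\sum_j$, and the computation $\sum_{e\in E_p}(\gamma_{1,e}+\gamma_{2,e})^3=N^{(3-p)/2}(\beta_{1,p}+\beta_{2,p})^3\le N^{-1/2}(\beta_{1,p}+\beta_{2,p})^3$ for $p\ge 4$ gives \eqref{thm7:eq2} exactly as you say.
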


The phenomena of chaos are concerned about the correlation between the spin configurations sampled from systems corresponding to different external parameters such as the temperature, external field and disorder. In the mixed $p$-spin models with Gaussian disorder, the precise mathematical statement one seeks to show is that there exists some constant $q\in [-1,1]$ such that
\begin{align}\label{chaos:eq1}
\lim_{N\rightarrow \infty}\e\bigl\la I(|Q_{1,1}-q|\geq \varepsilon)\bigr\ra_g=0,\,\,\forall \varepsilon>0,
\end{align}
or in the quenched sense,
\begin{align}\label{chaos:eq2}
\lim_{N\rightarrow\infty}\e\bigl\la I(|Q_{1,1}-\la Q_{1,1}\ra_g|\geq \varepsilon)\bigr\ra_g=0,\,\,\forall \varepsilon>0,
\end{align}
where $Q_{1,1}:=N^{-1}\sum_{i=1}^N\rho_i^1\tau_i^1$ is the overlap between two systems. Note that the first statement says the overlap is essentially concentrated around a constant, while in the second statement, the overlap is concentrated around its Gibbs expectation, which depends on the environment. There are two basic types of chaos phenomena that are of great interest. For two models that share the same temperature and external field, if \eqref{chaos:eq1} or \eqref{chaos:eq2} holds due to the decoupling of the disorder, that is, $0\leq t_p<1$ for at least one $p$, then we say the model is chaotic in disorder. Likewise, assuming the two models share the same disorder and external field but they use different temperatures $(\beta_{1,p})\neq (\beta_{2,p})$, if \eqref{chaos:eq1} or \eqref{chaos:eq2} is valid, we say the model exhibits chaos in temperature. 

Rigorous results on chaos in the mixed $p$-spin model with Gaussian disorder are summarized as follows. Chaos in disorder was proved by Chatterjee \cite{Chatterjee09}, where he considered the mixed even $p$-spin model without external field and the coupling rates of the disorder are all the same $t_p=t<1$ for all even $p.$  The situation in the presence of the external field was later studied by Chen \cite{Chen11}. The results on quenched chaos in temperature in certain generic mixed $p$-spin model was first appeared in Chen and Panchenko \cite{Chen14}, where in the simplest case, they considered the situation that there exists some $p_0\geq 2$ such that $\beta_{1,p},\beta_{2,p} \neq 0$ for all $p\geq p_0$ and moreover, $\beta_{1,p}=\beta_{2,p}$ for all $p>p_0$ and $\beta_{1,p_0}\neq\beta_{2,p_0}$. The results in \cite{Chen12} were later improved in Chen \cite{Chen13}, where chaos in temperature was established in the strong sense \eqref{chaos:eq1}. 

Note that the right-hand sides of \eqref{thm7:eq2} and \eqref{thm7:eq1} both go to $0$ as $N$ goes to infinity. All results on chaos in temperature and in disorder from \cite{Chatterjee09,Chen11,Chen12,Chen13} are now also valid in the mixed $p$-spin model with disorder $y$ as long as the moment matching conditions and the temperature parameters satisfy the assumptions of Theorem \ref{thm7}. For example, universality in temperature chaos holds in the following situation.

\begin{theorem}
Assume that there exists some $p_0\geq 2$ such that 
\begin{align}
\begin{split}\label{tc}
&\beta_{1,p}=\beta_{2,p}=0,\,\,\forall \; 2\leq p\leq p_0-1,\\
&\beta_{1,p_0},\beta_{2,p_0}\neq 0\,\,\mbox{and}\,\,\beta_{1,p_0}\neq \beta_{2,p_0},\\
&\beta_{1,p}=\beta_{2,p}\neq 0,\,\,\forall \; p>p_0
\end{split}
\end{align}
and that $h_1=h_2.$
In other words, the two mixed $p$-spin models are the mixture of all $p\geq p_0$ and they share the same temperatures except at $p_0.$ Suppose that one of the following two statements holds:
\begin{enumerate}
\item $p_0\geq 4$ and $y$ has mean zero, variance one and $\e |y|^3<\infty$.
\item $p_0\geq 2$ and the first four moments of $y$ match the first four moments of the standard Gaussian random variable. 
\end{enumerate}
Then there exists some constant $q\in [-1,1]$ depending only on the temperature parameters such that
\begin{align}\label{tc:eq1}
\lim_{N\rightarrow\infty}\e\bigl\la I(|Q_{1,1}-q|\geq \varepsilon)\bigr\ra_y=0,\,\,\forall \varepsilon>0.
\end{align}
\end{theorem}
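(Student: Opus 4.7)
The plan is to reduce the statement to the Gaussian analogue proved by Chen \cite{Chen13}, using Theorem \ref{thm7} as a universality transfer principle. In the Gaussian case, under hypothesis \eqref{tc}, \cite{Chen13} produces a constant $q\in [-1,1]$ depending only on the temperature parameters such that
\begin{align}\label{plan:gauss}
\lim_{N\to\infty}\e\bigl\la I(|Q_{1,1}-q|\geq \varepsilon)\bigr\ra_g=0,\qquad \forall\varepsilon>0.
\end{align}
Taking this $q$ as the constant in the theorem, I would apply Theorem \ref{thm7} to the observable $L=I(|Q_{1,1}-q|\geq \varepsilon)$, which depends only on $(\rho^1,\tau^1)$, so that $n=1$ and $\|L\|_\infty\leq 1$.

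The next step is to check that the right-hand sides of \eqref{thm7:eq2} and \eqref{thm7:eq1} vanish as $N\to\infty$. The assumption $\sum_{p\geq 2}2^p\beta_{j,p}^2<\infty$ forces $\beta_{j,p}=O(2^{-p/2})$, and hence the sums $\sum_p(\beta_{1,p}+\beta_{2,p})^r$ are finite for every $r\geq 3$. Under hypothesis (1), $p_0\geq 4$ gives $\beta_{1,p}=\beta_{2,p}=0$ for $p\in\{2,3\}$, which matches the premise of the first statement of Theorem \ref{thm7}, so the $N^{-1/2}$ prefactor in \eqref{thm7:eq2} makes the error go to $0$. Under hypothesis (2), the four-moment matching allows the second statement of Theorem \ref{thm7}: the $N^{-1/4}$ prefactor handles the second summand of \eqref{thm7:eq1}, while for the first I would invoke dominated convergence with $\e|y|^4<\infty$ to conclude $\e[|y|^4;|y|\geq N^{1/4}]\to 0$.

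Combining these ingredients yields
\begin{align*}
\e\bigl\la I(|Q_{1,1}-q|\geq \varepsilon)\bigr\ra_y\leq \e\bigl\la I(|Q_{1,1}-q|\geq \varepsilon)\bigr\ra_g+o(1),
\end{align*}
and the right-hand side tends to $0$ by \eqref{plan:gauss}, which is \eqref{tc:eq1}. The argument is essentially a direct transfer through Theorem \ref{thm7}; no step looks genuinely hard. The only thing to be careful about is pairing the correct case of Theorem \ref{thm7} with the correct hypothesis: case (1), in which only three moments of $y$ are controlled, can only be used once the $p=2,3$ contributions are absent, and this is precisely what $p_0\geq 4$ delivers. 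Case (2) places no extra restriction on $p_0$ because the four-moment matching absorbs the $p=2,3$ interactions.
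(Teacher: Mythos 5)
Your proposal is correct and follows essentially the same route as the paper: invoke \cite[Theorem~1]{Chen13} for the Gaussian constant $q$, set $L=I(|Q_{1,1}-q|\geq\varepsilon)$ with $n=1$, and transfer via Theorem~\ref{thm7}, with hypothesis (1) paired to statement~1 (since $p_0\geq 4$ kills the $p=2,3$ terms) and hypothesis (2) paired to statement~2. The paper leaves the vanishing of the error bounds in \eqref{thm7:eq2} and \eqref{thm7:eq1} implicit, whereas you spell it out, including the dominated-convergence step for $\e[|y|^4;|y|\geq N^{1/4}]$; this is a welcome elaboration, not a different argument.
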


\begin{proof} 
Under the assumption \eqref{tc}, \eqref{tc:eq1} holds for the Gaussian disorder by \cite[Theorem 1]{Chen13}. Letting $L=I(|Q_{1,1}-q|\geq \varepsilon)$ and applying Theorem \ref{thm7} conclude the announced results.

\end{proof}
\begin{center}
{\bf \large Appendix}
\end{center}

\begin{proof}[Proof of Lemmas \ref{lem2}]
Using Taylor's theorem for $F$ for $(k-1)$-th and  $k$-th orders,
\begin{align}
\begin{split}\label{app:eq2}
yF(y)&=\sum_{n=0}^{k-2}\frac{F^{(n)}(0)}{n!}y^{n+1}+\frac{F^{(k-1)}(a(y))}{(k-1)!}y^{k}
\end{split}\\
\begin{split}\label{app:eq1}
&=\sum_{n=0}^{k-1}\frac{F^{(n)}(0)}{n!}y^{n+1}+\frac{F^{(k)}(b(y))}{k!}y^{k+1}
\end{split}
\end{align}
and using Taylor's theorem for $F'$ for $(k-1)$-th order,
\begin{align}
\label{app:eq3}
F'(y)&=\sum_{n=1}^{k-1}\frac{F^{(n)}(0)}{(n-1)!}y^{n-1}+\frac{F^{(k)}(c(y))}{(k-1)!}y^{k-1},
\end{align}
where  $a(y),b(y),c(y)$ are some functions depending only on $y.$ It follows that from \eqref{app:eq2} and \eqref{app:eq3},
\begin{align}
\begin{split}\label{app:eq4}
I_1&:=(yF(y)-F'(y))-F^{'}(0)y-\sum_{n=1}^{k-1}F^{(n)}(0)\Bigl(\frac{y^{n+1}}{n!}-\frac{y^{n-1}}{(n-1)!}\Bigr)\\
&=-F^{(k-1)}(0)\Bigl(\frac{y^{k}}{(k-1)!}-\frac{y^{k-2}}{(k-2)!}\Bigr)+\frac{F^{(k)}(a(y))}{k!}y^{k}-\frac{F^{(k)}(c(y))}{(k-1)!}y^{k-1}
\end{split}
\end{align}
and from \eqref{app:eq1} and \eqref{app:eq3},
\begin{align}
\begin{split}\label{app:eq5}
I_2&:=(yF(y)-F'(y))-F^{'}(0)y-\sum_{n=1}^{k-1}F^{(n)}(0)\Bigl(\frac{y^{n+1}}{n!}-\frac{y^{n-1}}{(n-1)!}\Bigr)\\
&=\frac{F^{(k)}(b(y))}{k!}y^{k+1}-\frac{F^{(k)}(c(y))}{(k-1)!}y^{k-1}.
\end{split}
\end{align}
Therefore,
\begin{align*}
|I_1|&\leq \frac{\|F^{(k-1)}\|_\infty}{(k-2)!}(|y|^{k}+|y|^{k-2})+\frac{\|F^{(k)}\|_\infty}{(k-1)!}(|y|^k+|y|^{k-1}),\\
|I_2|&\leq \|F^{(k)}\|_\infty\Bigl(\frac{|y|^{k+1}}{k!}+\frac{|y|^{k-1}}{(k-1)!}\Bigr).
\end{align*}
Taking expectation on $|y|\geq K\geq 1$ for the first inequality, we obtain
\begin{align}\label{app:eq6}
\e[|I_1|;|y|\geq K]&\leq \frac{4\|F^{(k-1)}\|_\infty}{(k-2)!}\e[|y|^{k};|y|\geq K]
\end{align}
and taking expectation on $|y|\leq K$ for the second inequality, 
\begin{align}
\begin{split}\label{app:eq7}
\e[|I_2|;|y|\leq K]&\leq \|F^{(k)}\|_\infty\Bigl(\frac{\e[|y|^{k+1};|y|\leq K]}{k!}+\frac{\e[|y|^{k-1};|y|\leq K]}{(k-1)!}\Bigr).
\end{split}
\end{align}
Since 
\begin{align*}
\e[|y|^{k+1};|y|\leq K]\leq K\e|y|^k,\\
\e[|y|^{k-1};|y|\leq K]\leq K|y|^{k-2}
\end{align*}
and from H\"{o}lder's inequality,
\begin{align}
\label{app:eq9}
\e|y|^{k-2}=\e|y|^2\e|y|^{k-2}\leq \e|y|^{k},
\end{align}
we obtain from \eqref{app:eq7} that
\begin{align}\label{app:eq8}
\e[|I_2|;|y|\leq K]&\leq \frac{(k+1)K}{k!}\|F^{(k)}\|_\infty\e|y|^k.
\end{align}
Consequently, since 
$$
|\e yF(y)-\e F'(y)|=\e|I_1+I_2|\leq \e|I_1|+\e|I_2|,
$$
the inequalities \eqref{app:eq6} and \eqref{app:eq8} gives \eqref{aip:eq1}. As for \eqref{aip:eq2}, it can be obtained by letting $K$ tend to infinity and using again \eqref{app:eq9} with $k$ replaced by $k+1$.
\end{proof}

\end{document}